\newtheorem{thm}{Theorem}[section]
\newtheorem{cor}[thm]{Corollary}
\newtheorem{lem}[thm]{Lemma}
\newtheorem{prop}[thm]{Proposition}
\newtheorem{defn}[thm]{Definition}
\newtheorem{rmk}[thm]{Remark}
\newcommand{\Ad}{\operatorname{Ad}}
\newcommand{\tr}{\operatorname{tr}}
\def\Z{{\mathbb Z}}
\def\R{{\mathbb R}}
\def\C{{\mathbb C}}
\def\bb{\begin}
\def\bc{\begin{center}}       \def\ec{\end{center}}
\def\be{\begin{equation}}     \def\ee{\end{equation}}
\def\ba{\begin{array}}        \def\ea{\end{array}}
\def\bea{\begin{eqnarray}}    \def\eea{\end{eqnarray}}
\def\beaa{\begin{eqnarray*}}  \def\eeaa{\end{eqnarray*}}
\def\bma{\begin{pmatrix}}     \def\ema{\end{pmatrix}}
\def\bsl{\begin{split}}       \def\esl{\end{split}}
\def\hh{\!\!\!\!}             
\def\EQ{\hh & = & \hh}
               \def\om{\omega}
              \def\vp{\varphi}
               \def\th{\theta}
           \def\ga{\gamma}
               \def\Ga{\Gamma}
               \def\ro{\rho}
\def\M{\mathcal{M}}           \def\tro{\tilde{\rho}}
\def\nn{\nonumber}
\def\oo{\infty}                              
\def\q{\quad}                                \def\qq{\qquad}
\def\f{\frac}                                
\def\z{\left}                                \def\y{\right}
\def\ol{\overline}
\def\ul{\underline}
\def\A{{\mathcal A}}
\def\mcc{{\mathcal C}}
\def\D{{\mathcal D}}
\def\E{{\mathcal E}}
\def\M{{\mathcal M}}
\def\mcr{{\underline{R} }}
\def\bfr{{\bf r}}
\def\rd{\,{\rm d}}
\def\dt{\,{\rm d}t}
\def\ds{\,{\rm d}s}
\def\dx{\,{\rm d}x}
\def\dy{\,{\rm d}y}
\def\ifl{\iffalse}
\def\d{\cdot}
\def\oo{\infty}
\def\f{\frac}
\def\z{\left}
\def\y{\right}
\def\q{\quad}
\def\qq{\qquad}
\def\andq{\quad \mbox{ and } \quad}
\def\qqf{\quad \forall}
\def\lb{\label}
\def\nn{\nonumber}
\def\x#1{{\rm (\ref{#1})}}
\def\Proof{\noindent{\bf Proof} \quad}
\def\qed{\hfill $\Box$ \smallskip}
\def\vr{{\gamma}}
\def\tr{\mathrm{tr}}
\def\ysb#1{{\color{blue} #1}}
\begin{document}

\title{Geometric Invariants in Bicycle Dynamics via the Rotation Number Approach}
\author{Diantong LI, Qiaoling WEI, Meirong ZHANG\footnote{M.Z. was supported by the National Natural Science Foundation of China (Grant no. 11790273)}, Zhe ZHOU\footnote{Z.\ Z.\ was supported in part by the National Natural Science Foundation of China (Grants No. 12271509, 12090010, 12090014).}}

\date{}
\maketitle

\begin{abstract}

We study planar bicycle dynamics via the rotation number function \(\rho_\Gamma(R)\) associated with a closed front track \(\Gamma\) and bicycle length \(R\).
We prove that mode-locking \emph{plateaus} occur only at \emph{integer} rotation numbers and that \(\rho_\Gamma\) is real-analytic in \(R\) off resonance.
From \(\rho_\Gamma\) we introduce two new geometric invariants: the \emph{critical B-length} \(\mcr(\Gamma)\) (right end of the first plateau) and the \emph{turning B-length} \(\overline{R}(\Gamma)\) (left end of the maximal monotone interval).
For a star-shaped curve \(\Gamma\), these invariants coincide, yielding a sharp transition of the bicycle monodromy: hyperbolic for \(R<\mcr(\Gamma)\) and elliptic for \(R>\mcr(\Gamma)\).
The proofs combine projectivized \(SU(1,1)\) dynamics with Riccati equations and rotation-number theory.


\end{abstract}

\bigskip




\section{Introduction}
\setcounter{equation}{0} \lb{first}

The bicycle dynamics (BCD, for short) is a simple model with rich geometry and numerous links to classical mechanics and differential equations. In dimension $d$, a bicycle has a steerable front wheel and a rear wheel constrained to follow the instantaneous direction of the segment of fixed length $R$ connecting the rear and front contact points. Given the motion of the front tire by a parameterized curve $\Gamma$, called {\it front track}, the motion of the rear tire, called {\it rear track}, is any parameterized curve  $\gamma$ that satisfies:
\begin{align}
    &\|\Gamma(t)-\gamma(t)\|= R, \label{BC1}\\
   & \text{the tangent vector}\, \gamma'(t) \,\text{is parallel to}\, \gamma(t)-\Gamma(t).\label{BC2}
\end{align}


 In the plane, BCD is closely related to the geometry of the Prytz (hatchet) planimeter, and its time-$2\pi$ map on the circle is a M\"obius transformation, where the dynamics can be  divided into three classes: hyperbolic, parabolic or elliptic according to the number of fixed points of the M\"obius transformation on $S^1$, i.e. two, one, or zero respectively.
 See early geometric work of R.~Foote~\cite{F98}, the comprehensive treatment by Foote--Levi--Tabachnikov~\cite{FLS13}, and further developments by M.~Levi~\cite{Le17}, Bor--Levi--Perline--Tabachnikov~\cite{BLPT20}, Levi--Tabachnikov~\cite{LT09}, Bor--Levi~\cite{BL20}, and Bor--Jackman--Tabachnikov~\cite{BJT23}.

In this paper we focus on the planar case where the front wheel travels along a closed curve $\Gamma$. We introduce the \emph{rotation-number function} $\rho_\Gamma(R)$ of the bicycle length $R$, defined via the bicycle equation. This function is independent of the parametrizations of $\Gamma$ and only depends on the geometry of the curve. It encapsulates the asymptotic rear-wheel dynamics and, at the same time, reflects classical geometric data of $\Gamma$, including perimeter, enclosed (signed) area, rotation index, and curvature radii.

From a dynamical-systems viewpoint, the steering-angle equation (see Eq. {\bf (\ref{normal})}) may be regarded as a continuous analogue of families of circle maps that exhibit \emph{mode-locking} and \emph{Arnold tongues}; in our setting, we prove that plateaus of $\rho_\Gamma(R)$ can occur only at \emph{integer} rotation numbers, and that $\rho_\Gamma(R)$ is real-analytic off those resonance values. These phenomena connect BCD to classical rotation-number theory and Hill/Riccati-type equations, topics that also appear in~\cite{BL20,LT09}.

A central theme of our work is that $\rho_\Gamma(R)$ naturally singles out two geometric invariants of $\Gamma$. The first one is the \emph{critical B-length} $\mcr(\Gamma)$: it is the right end of the first plateau  of $\rho_\Gamma(R)$. The second is the \emph{turning B-length} $\overline{R}(\Gamma)$ : it is the left endpoint of the maximal interval on which $\rho_\Gamma(R)$ is strictly monotone, used to characterize the asymptotic behavior of $\rho(R)$.
The behavior of the function $\rho_{\Gamma}(R)$ , as well as the two invariants play an important role in the classification of the types of monodromy maps of BCD.  Estimates of $\mcr(\Ga)$ are given by inequalities using classical geometric invariants. In particular, an isoperimetric-type inequality involving $\mcr(\Ga)$ exists. See inequality (\ref{isoine}).

Finally,  we study a conjecture raised in \cite{WZ22} : for strictly convex closed curve $\Gamma$, the two geometric invariants introduced above coincide. That means, the associated rotation number function has only one plateau and is strictly decreasing elsewhere. It implies that the invariants $\mcr(\Ga)=\overline{R}(\Ga)$ serve as a criterion to distinguish the types of the M\"obius transformation of BCD, i.e., hyperbolic when $R<\mcr(\Gamma)$, elliptic when $R>\mcr(\Gamma)$. This  addresses the conjecture of monodromy types discussed recently by Bor--Hern\'andez-Lamoneda--
Tabachnikov~\cite{BHT24}. We prove both conjectures for a larger class of curves: star-shaped curves.

Historically, a preliminary version of this project appeared as the preprint~\cite{WZ22} by the second and third author. In \cite{WZ22}, being unaware of the bicycle track literature at the time, the problem was formulated in the language of “shadowing” : suppose that one person, called an escaper, is escaping along a parametrized curve $\Gamma (t)$, called escaping curve, where $t$ is the time. Another person, called a shadower, is shadowing the escaper using the simplest strategy {\it by staring at the escaper and keeping the initial distance from the escaper at all times}, his trace gives a shadowing curve $\gamma(t)$. It is apparent that $\Gamma$ and $\gamma$ are exactly the front track and rear track in BCD respectively.
The present paper recasts the framework in the standard BCD terminology, situates it within the classical works cited above, and strengthens the main conclusions:
\begin{itemize}
    \item Mode-locking (or plateaus) occurs only at integer levels and  $\rho_{\Gamma}$ is real analytic away from resonance ({\bf Theorem \ref{anal}});
    \item For star-shaped curve $\Gamma$, $\rho_{\Gamma}$ has a unique plateau, i.e. $\mcr(\Gamma)=\overline{R}(\Gamma)$ ({\bf Theorem \ref{main2}}) and $\mcr(\Gamma)$ serves as the unique transition length of the monodromy map from hyperbolic to elliptic ({\bf Theorem \ref{trans}}). 
\end{itemize}

\section{Rotation number and monodromy maps}
\setcounter{equation}{0} \lb{original}

\subsection{BCD equations}
From now on, we are concentrating on the BCD on the plane $\R^2$. Moreover, suppose that the front tire track $\Ga(t)$ is a regular $C^2$ closed curve\footnote{ The regularity can be indeed generalized to be piecewise $C^2$. However, for simplicity, we only consider the $C^2$ case in this paper.}:
    \be\label{Ga}
    \Ga: \q\Ga(t)=(\xi(t),\eta(t))\in C^2(\R,\R^2),\quad (\xi(t+L),\eta(t+L))\equiv (\xi(t),\eta(t)).
    \ee
with $\|\Gamma'(t)\|>0$ and some period $L>0$.
 The rear track in BCD is determined by solutions of first order ODE,  we  summarize various forms of the equations, which appeared similarly, for example, in \cite{FLS13,BLPT20}.

    \bb{lem} \lb{BCD0}
The rear track $\ga(t)$ is determined by the following first-order system of ODEs in $\R^d$
    \be \lb{SE}
    \ga'(t)  =\f{\Ga'(t)\d (\ga-\Ga(t))}{\|\ga-\Ga(t)\|^2}(\ga-\Ga(t)), \qq t\in \R.
    \ee

    \end{lem}

\Proof It is easy to see geometrically that the requirement \x{BC1} and \x{BC2} are fulfilled if and only if the tangent vector of the rear track equals to the projection of the tangent vector  of the front track $\Gamma'(t)$ along the direction $\gamma(t)-\Gamma(t)$.

We call the initial distance
\[R:=\|\gamma(0)-\Gamma(0)\|\] the {\it bicycle length} in the sequel.
When $R>0$ is fixed, Eq. \x{SE} is an ODE on the circle $S^1= \R/L \Z$. By denoting
    \[
    \bfr(t) = (\cos \th(t), \sin \th(t))\in S^1, \qq \vr(t)= \Ga(t)+ R\bfr(t),
    \]
one can deduce from Eq. \x{SE} that the equation for $\th=\th(t)$ is
    \be \lb{RSE}
    \th'   = \f{1}{R}(\xi'(t)\sin \th -\eta'(t)\cos \th):=F_R(t,\th).
    \ee
Solutions of Eq. \x{RSE} satisfying $\th(0)=\th_0\in\R$ are denoted by
$\th(t,\th_0)$ or by $\th_R(t,\th_0)$ when $R$ is emphasized.

Another useful version of BCD equation is to consider the equation of the {\it steering angle} $\alpha(t)$, that is, the angle from the tangent vector $\Gamma'(t)$ to the direction $\gamma(t)-\Gamma(t)$. For this purpose, write $\Ga'(t)$ in polar coordinates as
    \be \lb{ec23}
    \Ga'(t) \equiv B(t) \z( \cos \psi(t), \sin \psi(t)\y)= B(t) e^{i \psi(t)},
    \ee
where $B(t)=\|\Ga'(t)\|>0$ is a continuous $2\pi$-periodic function, and $\psi(t)$ is a continuous function which is uniquely determined by choosing
    \(
    \psi(0) \in[-\pi,\pi).
    \)

Then the steering angle is $\alpha=\th-\psi$.
\begin{lem} If $\Gamma$ is a regular $C^2$  closed curve, the steering angle $\alpha$ satisfies the following ODE:
\begin{equation}\label{eq:angle}
    \alpha'(t)=B(t)(\frac{1}{R}\sin\alpha-\kappa(t))=:f_R(t,\alpha),
\end{equation}
 where $\kappa(t)$ is the signed curvature function of the front track $\Gamma$.
\end{lem}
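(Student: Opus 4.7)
The plan is to directly substitute $\alpha = \theta - \psi$ into the already-established ODE (\ref{RSE}) for $\theta$, and separately identify $\psi'$ with the signed curvature of $\Gamma$ (times the speed). Both computations are short once the polar form (\ref{ec23}) of $\Gamma'$ is in place.

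First I would differentiate $\alpha(t) = \theta(t) - \psi(t)$, giving $\alpha' = \theta' - \psi'$. Substituting $\xi'(t) = B(t)\cos\psi(t)$ and $\eta'(t) = B(t)\sin\psi(t)$ from (\ref{ec23}) into (\ref{RSE}) yields
\[
\theta'(t) = \frac{B(t)}{R}\bigl(\cos\psi(t)\sin\theta(t) - \sin\psi(t)\cos\theta(t)\bigr) = \frac{B(t)}{R}\sin\bigl(\theta(t)-\psi(t)\bigr) = \frac{B(t)}{R}\sin\alpha(t).
\]
This step is purely trigonometric and is the easy half.

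Next I would compute $\psi'(t)$ in terms of the signed curvature $\kappa(t)$. Differentiating $\xi' = B\cos\psi$ and $\eta' = B\sin\psi$ once more gives
\[
\xi'' = B'\cos\psi - B\psi'\sin\psi, \qquad \eta'' = B'\sin\psi + B\psi'\cos\psi.
\]
A direct expansion then shows $\xi'\eta'' - \eta'\xi'' = B^2\psi'$. Inserting this into the standard signed-curvature formula
\[
\kappa(t) = \frac{\xi'(t)\eta''(t) - \eta'(t)\xi''(t)}{\bigl(\xi'(t)^2 + \eta'(t)^2\bigr)^{3/2}} = \frac{B(t)^2\psi'(t)}{B(t)^3} = \frac{\psi'(t)}{B(t)},
\]
we obtain $\psi'(t) = B(t)\kappa(t)$. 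The $C^2$ regularity of $\Gamma$ with $\|\Gamma'\|>0$ is precisely what guarantees that $\psi$ is $C^1$ and that this identification is legitimate.

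Combining the two pieces yields
\[
\alpha'(t) = \theta'(t) - \psi'(t) = \frac{B(t)}{R}\sin\alpha(t) - B(t)\kappa(t) = B(t)\left(\frac{1}{R}\sin\alpha(t) - \kappa(t)\right),
\]
which is (\ref{eq:angle}). There is no real obstacle here; the only subtle point is choosing a continuous lift $\psi$ of the argument of $\Gamma'$ (which the normalization $\psi(0)\in[-\pi,\pi)$ together with the $C^2$ hypothesis on $\Gamma$ makes unambiguous), so that $\psi'$ is well-defined and the curvature identity $\psi' = B\kappa$ holds pointwise.
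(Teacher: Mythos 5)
Your proof is correct and follows essentially the same route as the paper: rewrite \x{RSE} in the polar form of $\Ga'$ to get $\th'=\frac{B}{R}\sin(\th-\psi)$, substitute $\alpha=\th-\psi$, and use the identity $\kappa=\psi'/B$ (which the paper quotes as a geometric fact and you verify by direct computation). No gaps; the extra verification of $\psi'=B\kappa$ is a harmless elaboration of the paper's cited formula.
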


\begin{proof}
    With the polar form \x{ec23}, the BCD equation \x{RSE} is written as
    \[\th'=\frac{1}{R}B(t)\sin(\th-\psi),\]
    Plug in $\alpha=\th-\psi$, we arrive at (\ref{eq:angle}) using the geometric fact that
    \[\kappa(t)=\frac{\det(\Ga'(t),\Ga''(t))}{\|\Ga'(t)\|^3}=\frac{\psi'(t)}{B(t)}.\]

\end{proof}

\subsection {Rotation number function $\rho_{\Gamma}(R)$}
By virtue of the periodicity of Eq. \x{RSE}, one knows that the monodromy maps $\th_0 \mapsto \th(t,\th_0)$ satisfies
    \be \lb{P}
    \th(t,\th_0+2k\pi) = \th(t,\th_0) + 2k\pi \qqf t\in \R, \ k \in \Z.
    \ee
Moreover, as Eq. (\ref{RSE}) is a time-periodic ODE on the circle $S^1$, it is well-known that it admits a rotation number \cite{A83, H80, KH95}:

   \be \lb{rhoF}
    \ro(R)=\rho_{\Gamma}(R):=\frac{L}{2\pi}\lim_{t\to \infty}\frac{\th_R(t,\th_0)-\th_0}{t}=\lim_{n\to\infty}\frac{1}{2\pi}\frac{\mathcal{P}_R^n(\theta_0)-\theta_0}{n}.
    \ee
  where $\mathcal{P}_R:\theta_0\mapsto \theta(L,\theta_0)$ denotes the Poincar\'e map.  See for example Theorem 2.1 in \cite{H80}.

    \bb{defn} \lb{rn}
{\rm Given a closed curve $\Ga$ as in \x{Ga}, we use $\ro(R)=\ro_\Ga(R)$ to denote the rotation number of Eq. \x{RSE}. It is called the rotation number function of $\Ga$.}
    \end{defn}

We remark that $\ro(R)$ does not need to be taken modulo $\Z$, and is a continuous function of $R\in(0,+\oo)$.

It will also be convenient to consider  the rotation number defined by the equation \x{eq:angle} :
\[\tilde{\rho}(R)=\tilde{\rho}_{\Gamma}(R):=\lim_{t\to \infty}\frac{\alpha_R(t,\alpha_0)-\alpha_0}{t}.\]

Due to the periodicity of $\Ga'(t)$, one has
    \be \lb{Avp}
    \psi(t+L) \equiv 2 \om \pi+ \psi(t),
    \ee
where
    \be \lb{omega}
    \om= \om(\Ga):=\frac{1}{2\pi}\int_0^{L}\psi'(t)\dt
    \ee
is an integer.
This number $\om$ is the {\it winding number} of the derivative closed curve $\Ga'$ (not $\Ga$ itself). It is also called the {\it rotation index} of $\Ga$ (\cite{dC76})
 which measures the complete turns given by the tangent vector field along the closed curve $\Ga$ around the origin. A non-trivial Jordan curve (a simple closed curve) $\Ga$ has rotation index $\pm 1$.

It follows directly that
    \[\tilde{\rho}(R)=\rho(R)-\omega.\]

The following lemma shows that the rotation number function $\rho_{\Gamma}(R)$ only depends on the geometry of the curve.
\begin{lem} \lb{inv}
    For any fixed length $R>0$, the rotation number $\ro_{\Ga}(R)$ is independent of the parametrization of $\Ga$.
\end{lem}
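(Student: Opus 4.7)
My plan is to show that the whole construction of the Poincar\'e map $\mathcal{P}_R$ descends from a manifestly geometric recipe (the segment of length $R$ chasing the front wheel), so that reparametrizing $\Ga$ merely time-changes every solution of \x{RSE} by the same diffeomorphism $\phi$. Since the rotation number in \x{rhoF} depends only on iterates of $\mathcal{P}_R$, it must then be invariant.

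Concretely, let $\phi:\R\to\R$ be an orientation-preserving $C^1$ diffeomorphism with $\phi(s+\tilde L)=\phi(s)+L$, and set $\tilde\Ga(s):=\Ga(\phi(s))$, a $\tilde L$-periodic reparametrization. I first reduce to the case $\phi(0)=0$; the general shift will be handled at the end. The chain rule $\tilde\Ga'(s)=\phi'(s)\Ga'(\phi(s))$ shows that the steering equation for $\tilde\Ga$ is
$$\f{d\tilde\th}{ds}=\phi'(s)F_R(\phi(s),\tilde\th),$$
i.e.\ a pure time-change of \x{RSE}. A direct substitution then shows that $\tilde\th(s,\th_0):=\th(\phi(s),\th_0)$ solves this reparametrized ODE with initial value $\th_0$. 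Evaluating at $s=\tilde L$ and using $\phi(\tilde L)=L$ gives
$$\tilde{\mathcal P}_R(\th_0)=\tilde\th(\tilde L,\th_0)=\th(L,\th_0)=\mathcal P_R(\th_0),$$
so the two Poincar\'e maps literally coincide. Hence $\ro_{\tilde\Ga}(R)=\ro_\Ga(R)$ from the second equality in \x{rhoF}.

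The only residual subtlety is to allow $\phi(0)=a\ne 0$, i.e.\ the new parametrization starts at a different point of the image. In that case $\tilde{\mathcal P}_R$ is conjugate to $\mathcal P_R$ via the finite-time map $\th_0\mapsto \th(a,\th_0)$, which is an orientation-preserving $C^2$ diffeomorphism of $\R$ commuting with $\th\mapsto \th+2\pi$ by \x{P} and therefore descending to a homeomorphism of $S^1$. Classical rotation-number theory (see \cite{H80, KH95}) says that conjugation by such a map preserves the rotation number, so $\ro_{\tilde\Ga}(R)=\ro_\Ga(R)$ in the general case too. There is no analytically deep step here; the main content is simply the observation that \x{RSE} is intrinsic to the geometric data $(\Ga,R)$, which makes the chain-rule computation do all the work.
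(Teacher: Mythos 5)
Your proposal is correct. It rests on the same core observation as the paper's proof --- a reparametrization is a pure time change of \x{RSE}, so $\tilde\th(s)=\th(\phi(s),\th_0)$ solves the reparametrized equation --- but the two arguments finish differently. The paper compares an arbitrary parametrization with the arc-length one $\Ga_*=\Ga\circ\beta^{-1}$ and works with the continuous-time definition in \x{rhoF}, computing $\ro_{\Ga_*}$ as a product of limits and using $\beta^{-1}(s)/s\to L/\ell$ to recover $\ro_\Ga$; you instead compare two arbitrary parametrizations directly, observe that when $\phi(0)=0$ the two Poincar\'e maps coincide on the nose, and invoke the discrete form of \x{rhoF}. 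Your route buys two things: it avoids the limit arithmetic (no need to split $\lim\th(\beta^{-1}(s))/s$ into two factors), and it explicitly treats the base-point shift $\phi(0)=a\ne 0$ via conjugation by the time-$a$ flow map, which by \x{P} descends to an orientation-preserving circle homeomorphism, so the (lifted) rotation number is preserved exactly, not just mod $\Z$; the paper does not need this step only because $\beta(0)=0$ in its normalization. Two cosmetic remarks: to stay inside the paper's class of regular $C^2$ curves you should take $\phi$ to be a $C^2$ (not merely $C^1$) orientation-preserving diffeomorphism, and, as in the paper, the statement implicitly concerns orientation-preserving reparametrizations, since reversing the orientation flips the sign of $\ro_\Ga(R)$.
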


\Proof
Let $\Ga$ be as in \x{Ga} and \x{ec23}, which are parametrized using $t$. The arc-length parameter $\bar{s}$ is given by
    \[
    \bar{s}=\int_0^t \|\Ga'(t)\|\dt'=:\beta(t).
    \]
It follows that
    \[
    \beta(t+L)\equiv \beta(t)+\int_0^{L} \|\Ga'(t)\|\dt=\beta(t)+\ell,
    \]
where $\ell=\ell(\Ga)$ is the  perimeter of $\Ga$. Then
    \[
    \beta^{-1}(\bar{s}+\ell)\equiv \beta^{-1}(\bar{s})+L.
    \]
The arc-length parametrization of $\Ga$ is $
    \Ga_*(s):=\Ga(\beta^{-1}(s)),
$
which is $\ell$-periodic in $s$.

Given a bicycle length $R$, if $\th=\th(t)$ is a solution of the  BCD equation \x{RSE} to $\Ga(t)$,
it is direct to check that $\th_*(s):=\th(\beta^{-1}(s))$ is a solution to $\Ga_*(s)$, i.e.
    \[
   \th_*'(s) = \f{1}{R}\Ga'_*(s) \d (\sin \th_*(s), -\cos \th_*(s)),
    \]
Thus
    \[
    \ro_{\Ga_*}=\frac{\ell}{2\pi}\lim_{s\to \infty}\frac{\th_*(s)}{s}=\frac{\ell}{2\pi}\lim_{s\to\infty}\frac{\th(\beta^{-1}(s))}{\beta^{-1}(s)}\d\lim_{s\to\infty}\frac{\beta^{-1}(s)}{s}=\frac{L}{2\pi}\lim_{t\to\infty}\frac{\th(t)}{t}=\ro_{\Ga}.
    \]

\begin{cor}\lb{arnold}
 Given any $c>0$, let $\Gamma_{c}:=c\Gamma$. Then $\ro_{\Gamma_{c}}(R)= \ro_{\Gamma}(c^{-1} R)$.

\end{cor}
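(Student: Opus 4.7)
The plan is to exploit the explicit form of the BCD equation \x{RSE} and observe that rescaling the curve by a factor $c$ is exactly equivalent to rescaling the bicycle length by $c^{-1}$. The key point is that the periodic time-interval $[0,L]$ that defines the Poincaré map is unchanged by the scaling $\Gamma_c:=c\Gamma$, while the coefficients of the circle ODE transform in a simple way.

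First, I would write down Eq. \x{RSE} for $\Gamma_c(t)=(c\xi(t),c\eta(t))$ with bicycle length $R$. Since $\xi_c'=c\xi'$ and $\eta_c'=c\eta'$, the right-hand side becomes
\[
F_R^{(c)}(t,\theta)=\frac{1}{R}\bigl(c\xi'(t)\sin\theta-c\eta'(t)\cos\theta\bigr)=\frac{1}{R/c}\bigl(\xi'(t)\sin\theta-\eta'(t)\cos\theta\bigr)=F_{R/c}(t,\theta),
\]
which is exactly the right-hand side of Eq. \x{RSE} for $\Gamma$ with bicycle length $c^{-1}R$. Hence the two $\theta$-equations coincide pointwise, so they have the same solutions $\theta(t,\theta_0)$ and, in particular, the same Poincaré map $\mathcal{P}_{R}^{(c)}=\mathcal{P}_{R/c}$. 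Applying the definition \x{rhoF} of the rotation number, noting that the common period $L$ appears as the same normalizing factor on both sides, yields
\[
\rho_{\Gamma_c}(R)=\rho_{\Gamma}(c^{-1}R).
\]

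No serious obstacle arises: the argument is a one-line substitution. The only mild point to spell out is that the period used in \x{rhoF} is $L$ for both $\Gamma$ and $\Gamma_c$, since $\Gamma_c(t+L)=c\Gamma(t+L)=c\Gamma(t)=\Gamma_c(t)$, so the normalization factor $L/(2\pi)$ is the same in both rotation numbers. If desired, one could alternatively invoke Lemma~\ref{inv} by reparametrizing, but direct comparison of the ODEs is cleaner. This also makes the statement manifest from a scaling viewpoint: doubling the curve while halving the bicycle recovers the same steering dynamics.
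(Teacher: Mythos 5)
Your proof is correct and follows the same route as the paper, which simply notes that the statement follows directly from the BCD equation \x{RSE}; you have merely written out the substitution $F_R^{(c)}(t,\theta)=F_{R/c}(t,\theta)$ and the unchanged period $L$ that the paper leaves implicit.
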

\begin{proof}  The statement follows directly from the BCD Eq. \x{RSE}.\qed
\end{proof}

\begin{lem} \label{upbound}
For any regular $C^2$ closed  curve $\Ga$ as in \x{Ga}, the rotation number function $\ro_\Ga(R)$ satisfies
    \be \lb{rhoe1}
    |\ro_\Ga(R)|\leq \frac{\ell(\Ga)}{2\pi R}, \qquad \forall R\in (0,\infty).
    \ee
 In particular, one has
    \(
    \lim_{R\to+\oo} \ro_\Ga(R)=0.
    \)
    \end{lem}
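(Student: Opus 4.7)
The plan is to bound the right-hand side of the BCD equation (\ref{RSE}) uniformly in $\theta$ by a $2\pi$-periodic... I mean $L$-periodic function, integrate, and then pass to the limit in the definition (\ref{rhoF}) of the rotation number.

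First, I would observe that for every $(t,\theta)\in\R^2$ we have
\[
|F_R(t,\theta)| \;=\; \frac{1}{R}\bigl|\xi'(t)\sin\theta - \eta'(t)\cos\theta\bigr| \;\le\; \frac{1}{R}\sqrt{\xi'(t)^2+\eta'(t)^2} \;=\; \frac{\|\Gamma'(t)\|}{R} \;=\; \frac{B(t)}{R},
\]
by the Cauchy--Schwarz inequality (the supremum being attained by an appropriate choice of $\theta$). Consequently any solution $\theta_R(t,\theta_0)$ of \x{RSE} satisfies
\[
|\theta_R(t,\theta_0) - \theta_0| \;\le\; \int_0^{t} |F_R(s,\theta_R(s,\theta_0))|\,ds \;\le\; \frac{1}{R}\int_0^{t} B(s)\,ds.
\]

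Next, since $B(t)$ is $L$-periodic (as $\Gamma$ itself is $L$-periodic) and continuous, and $\int_0^L B(t)\,dt = \ell(\Gamma)$ by the definition of arc length, the ergodic limit
\[
\lim_{t\to\infty}\frac{1}{t}\int_0^t B(s)\,ds \;=\; \frac{\ell(\Gamma)}{L}
\]
holds. Dividing the previous inequality by $t$ and letting $t\to\infty$ therefore gives
\[
\left|\lim_{t\to\infty}\frac{\theta_R(t,\theta_0)-\theta_0}{t}\right| \;\le\; \frac{1}{R}\cdot \frac{\ell(\Gamma)}{L}.
\]
Multiplying by $L/(2\pi)$ and recalling \x{rhoF} yields \x{rhoe1}. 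The last assertion $\lim_{R\to+\infty}\rho_\Gamma(R)=0$ is then immediate, since the right side of \x{rhoe1} tends to zero.

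There is no genuine obstacle here; the only small care needed is to justify interchanging the $\limsup$ with the $\lim$ of the average of $B$, which is automatic because $B$ is continuous and periodic, hence the time average along any $t\to\infty$ coincides with its mean over one period.
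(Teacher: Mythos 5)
Your proof is correct and follows essentially the same route as the paper: bound $|\theta'|\le \|\Gamma'(t)\|/R$ pointwise (Cauchy--Schwarz), integrate, and pass to the limit using the periodic average of $\|\Gamma'\|$ over one period, which equals $\ell(\Gamma)/L$. No issues.
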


\Proof  One has from Eq. \x{RSE} that
    \[
    |\th'(t)| \le \f{1}{R} \|(\xi'(t),-\eta'(t))\| =\f{1}{R} {\|\Ga'(t)\|}.
    \]
Hence
    \[
    |\th(t)|\le |\th(0)| +  \f{1}{R} \int_0^t \|\Ga'(s)\| \ds\qqf t\ge 0,
    \]
and
   \[
   |\ro(R)| \le \lim_{t\to\oo} \f{L}{2\pi t}\z(|\th(0)| +  \f{1}{R} \int_0^t \|\Ga'(s)\| \ds\y) = \f{1}{2\pi R} \int_0^{L} \|\Ga'(s)\|\ds = \f{\ell(\Gamma)}{2\pi R}.
    \]
\qed

In view of Corollary \ref{arnold}, one can always study $\rho_{\Ga}$ for  $\Ga$ in arc-length parametrization. With this normalization, the BCD Eq. \x{eq:angle} for the steering angle $\alpha$  writes as
\be\lb{normal}\alpha'=\frac{\sin \alpha}{R}-\kappa(s)\ee

This equation can be viewed as a continuous counterpart of the discrete circle maps
\[\alpha_{i+1}=\alpha_i+\Omega+ K\sin \alpha_i\]
where Arnold tongues and mode-locking appear.

More precisely, a family of circle maps with parameter $R$ is mode-locked at level $a$ if there is a nontrivial interval $I$ of parameters with rotation number $\rho(R)=a$ for all $R\in I$. Such an interval is a plateau (mode-locking interval), and we say that $\rho$ is resonant at $a$. In our setting, the bicycle equations gives rise to a family of circle maps defined by the Poincar\'e map $\mathcal{P}_R$.
Thus it is interesting to discuss the mode-locking phenomena for $\rho_{\Gamma}(R)$.

The next example shows that the rotation number of a circle has a unique plateau.

  \bb{lem} \lb{rhoc}
The rotation number function of the unit circle $\mcc$ is given by
    \[
    \ro_\mcc(R)=
    \z\{ \ba{ll}
    1 & \mbox{ for } R\in(0,1],\\
    1-\f{\sqrt{R^2-1}}{R}
    & \mbox{ for } R\in[1,+\oo).
    \ea\y.
    \]
    \end{lem}
\begin{proof}
    Consider the BCD Eq. \x{eq:angle}:
    \[\alpha'=\frac{\sin\alpha}{R}-1,\]

If $R\leq 1$, $\alpha'=0$ has solutions, i.e. fixed points exist, thus $\tilde{\rho}(R)=0$.

If $R>1$, we can find this by calculating the time $T_\alpha$ it takes for $\alpha$ to change by one full period, $\Delta\alpha = -2\pi$.
$$T_\alpha = \int_0^{T_\alpha} dt = \int_{\alpha_0}^{\alpha_0 - 2\pi} \frac{d\alpha}{\alpha'} = \int_0^{2\pi} \frac{R}{R - \sin\alpha} d\alpha=\frac{2\pi R}{\sqrt{R^2 - 1}}.$$
The rotation number $\tilde{\rho}_{\mcc}(R)$ is the total change in angle divided by the time taken:
$$\tilde{\rho}_{\mcc}(R) = \frac{\Delta\alpha}{T_\alpha}  = -\frac{\sqrt{R^2 - 1}}{R}.$$
\end{proof}
\qed

\subsection{Monodromy maps}
In the remainder of the paper, $\Gamma$ is a regular $C^2$ closed curve taken to be 
$2\pi$ periodically parametrized, unless explicitly stated otherwise.

It is classical to introduce the monodromy maps of BCD on $S^1$:
 \be
    \M_R^t : S^1\to S^1,\quad z=e^{i\theta_0} \mapsto z(t)=e^{i\theta_R(t,\theta_0)}
    \ee
where $\th_R(t,\th_0)$ is a solution of \x{RSE}. It is known that $\M_R^t$ is a M\"obius map, see \cite{F98}. For completeness, we give a self-contained proof in the following.

   \bb{thm}(\cite{F98}) \lb{mono-01} The monodromy map $\M_R^t$ is a M\"obius map, which is determined by the ordinary differential equation
    \be \lb{monodromy}
    M_R'(t)  =  -\frac{1}{2R}\begin{pmatrix}
0 & \xi'(t) + \eta'(t)i\\
\xi'(t) - \eta'(t)i & 0
\end{pmatrix}M_R(t)=:-\frac{1}{R} A(t)M_R(t),   \quad M(0) = \text{I},
    \ee
on    
    \[
  	SU(1,1) =  \left\{\begin{pmatrix}
	a & b\\
	\bar{b} & \bar{a}
	\end{pmatrix} 
	| a,b \in \mathbb{C}, {|a|}^{2} - {|b|}^{2} = 1\right\}.
    \]
More precisely, if
    \[
    M_{R}(t) = \begin{pmatrix} a_{R}(t) & b_{R}(t) \\ \overline{b_{R}(t)} & \overline{a_{R}(t)} \end{pmatrix}
    \]
is the solution of \x{monodromy}, then
    \be\lb{Mo}
    \M_R^t(z) =M_R(t)\cdot z:= \frac{a_R(t)z + b_R(t)}{\overline{b_R(t)}z + \overline{a_R(t)}}.
    \ee
for any $z \in \text{S}^1$.
   \end{thm}
   
\begin{proof}
     For fixed $R$, let $z(t)=e^{i\th(t)}$ where $\th(t)$ is a solution of the BCD Eq. \x{RSE}. Then
     \be\lb{Rec}
     \dot{z}=i\dot{\th}z=\frac{1}{2R} [(\xi'-i\eta')z^2-(\xi'+i\eta')],
     \ee
where the identities $\sin \th= 1/2i(z-z^{-1})$ and $\cos\th=1/2(z+z^{-1})$ are used. Notice that Eq. \x{Rec} is a Riccati equation. Introduce the ODE system
\be\lb{fun} X'(t)=-\frac{1}{R}A(t)X(t),\quad  X(t)=(u(t),v(t))^T,\ee
where $A(t)$ is defined in \x{monodromy}.

It is direct to check that $z(t)=u(t)/v(t)$ satisfies Eq. \x{Rec}. Hence the fundamental solution matrix $M_R(t)$ of \x{fun} recover the monodromy map $\M_R^t$ by the standard matrix correspondence to a M\"obius map \x{Mo}. Finally, since $\M_R^t$ maps $S^1$ to $S^1$, it follows that $M_R(t)\in SU(1,1)$.
\end{proof}\qed

Denote
\[\M_R:=\M_R^{2\pi},\quad  M_R:=M_R(2\pi)=\begin{pmatrix} a_{R} & b_{R} \\ \overline{b_{R}} & \overline{a_{R}} \end{pmatrix}.  \]
A non identity monodromy map $\M_R$ is classified into the following three cases:
\begin{enumerate}
    \item $\M_R$ is hyperbolic, i.e., has two fixed points on $S^1$ $\Leftrightarrow$ $|\text{tr}(M_R)|>2$;
    \item $\M_R$ is parabolic, i.e. has one fixed point on $S^1$ $\Leftrightarrow$ $|\text{tr}(M_R)|=2$;
    \item $\M_R$ is elliptic, i.e. has no fixed point on $S^1$ $\Leftrightarrow$ $|\text{tr}(M_R)|< 2$;
\end{enumerate}
where $\text{tr}(M_R)=2\text{Re} (a_R)$ denotes the trace of the matrix $M_R$.
\bigskip

\begin{defn} The set of $R$ such that the monodromy map $M_R:=M_R(2\pi)\in SU(1,1)$ is elliptic is called the elliptic set of $\Gamma$.
\end{defn}

The following lemma tells the relation between the types of the monodromy map and the rotation number, which is direct by definition.

\begin{lem} The rotation number $\rho(R)$ is in $\Z$ if and only if $\mathcal{M}_R$ is hyperbolic or parabolic; $\rho(R)$ modulo $\Z$ is in $(0,1)$ if and only if $\mathcal{M}_R$ is elliptic.

\end{lem}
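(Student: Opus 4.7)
My plan is to translate the claim into the language of lifts of the monodromy map and then invoke two standard facts: the M\"obius classification of $SU(1,1)$ by fixed points on $S^1$ (recalled just before the statement) and Poincar\'e's characterization of rotation numbers for orientation-preserving circle homeomorphisms. Let $F:\R\to\R$ denote the lift of $\M_R$ obtained by setting $F(\theta_0)=\theta_R(2\pi,\theta_0)$; the quasi-periodicity property (\ref{P}) guarantees $F(\theta+2\pi)=F(\theta)+2\pi$, so $F$ is a genuine lift of an orientation-preserving homeomorphism of $S^1$, and (\ref{rhoF}) reads $\rho(R)=\lim_{n\to\infty}(F^n(\theta_0)-\theta_0)/(2\pi n)$.

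For the forward direction of the first equivalence, I would assume $\M_R$ is hyperbolic or parabolic. Then the M\"obius trichotomy stated just before the lemma furnishes a fixed point $e^{i\theta_0}\in S^1$ of $\M_R$. Lifting, $F(\theta_0)=\theta_0+2k\pi$ for some $k\in\Z$, and an elementary induction using $F(x+2\pi)=F(x)+2\pi$ yields $F^n(\theta_0)=\theta_0+2kn\pi$ for all $n\ge 1$. Substituting into the definition of the rotation number immediately gives $\rho(R)=k\in\Z$.

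For the converse I would appeal to Poincar\'e's classical theorem: the rotation number of an orientation-preserving circle homeomorphism is rational with lowest-terms denominator $q$ if and only if the map admits a periodic orbit of exact period $q$. Specialising to $\rho(R)\in\Z$ (the case $q=1$) produces a fixed point of $\M_R$ on $S^1$, so by the M\"obius classification $\M_R$ is hyperbolic or parabolic. Combining the two directions establishes the first equivalence. The second equivalence then follows by contrapositive: since the hyperbolic, parabolic, and elliptic cases exhaust the non-identity elements of $SU(1,1)$, and since $\rho(R)\bmod\Z\in(0,1)$ is exactly $\rho(R)\notin\Z$, the elliptic case corresponds to non-integer rotation numbers.

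I do not foresee any serious obstacle; the argument is essentially bookkeeping on top of two standard theorems. The only subtle point worth flagging is that in the forward direction the rotation number must be known to be independent of the base point, which is a standard property of orientation-preserving circle homeomorphisms, so that computing the limit at the special lifted fixed point $\theta_0$ actually yields the intrinsic value of $\rho(R)$.
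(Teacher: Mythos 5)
Your proof is correct and takes the route the paper itself regards as immediate (it offers no proof, calling the lemma ``direct by definition''): the fixed-point/rotation-number dictionary, with the forward direction from lifting a fixed point and the converse from Poincar\'e's theorem specialised to denominator $q=1$. The only pedantic caveat is the identity monodromy $M_R=\pm I$, which has integer rotation number yet is excluded from the paper's trichotomy of non-identity maps; this edge case sits in the statement itself rather than in your argument, so no repair is needed on your side.
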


It is known that, if a M\"obius map is elliptic, it is conjugated to a rotation. We summarize this property and use it to give the rotation number in the following proposition.
 \begin{prop} \lb{mono-02}
      If $\M_R$ is elliptic, then it is topologically conjugated to a rotation on $S^1$ and in any connected component $I_E$ of the elliptic set, the rotation number of BCD is given by
      \be\lb{ellip}\rho(R) = \frac{\epsilon}{\pi}\,\operatorname{arccos}(\frac 12\text{tr}(M_R))+k,\quad R\in I_E.\ee
      for some $k\in \Z$, $\epsilon\in \{1,-1\}$.
 \end{prop}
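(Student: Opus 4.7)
The plan is to first exhibit an $SU(1,1)$-conjugacy of $M_R$ to a diagonal rotation block, and then translate this into the rotation-number formula by a continuity argument on $I_E$.

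First, I would diagonalize $M_R$ within $SU(1,1)$. Since $\M_R$ is elliptic, $|\tr(M_R)|<2$, so the characteristic equation $\lambda^2-\tr(M_R)\lambda+1=0$ has complex-conjugate roots $\lambda=e^{\pm i\phi_R}$ with $\phi_R\in(0,\pi)$ and $\cos\phi_R=\tr(M_R)/2$. Setting $\Phi(R):=\arccos(\tr(M_R)/2)$, an eigenvector $v\in\C^2$ for $e^{i\phi_R}$ can be shown to satisfy $|v_1|^2-|v_2|^2\neq 0$, so a suitable scalar multiple serves as the first column of some $P\in SU(1,1)$, completing the second column from the other eigenvector. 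This produces
\[
P^{-1}M_RP=\operatorname{diag}(e^{i\phi_R},e^{-i\phi_R})=:D.
\]
Equivalently, the elliptic disk automorphism $\M_R$ has a fixed point in the open unit disk, and the disk automorphism sending this fixed point to $0$ conjugates $\M_R$ into a Möbius map that fixes $0$ and preserves $S^1$, hence is a rigid rotation.

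Next, $D$ induces on $S^1$ the map $z\mapsto e^{2i\phi_R}z$, a rotation by angle $2\phi_R$. Since $P\in SU(1,1)$ acts on $S^1$ as an orientation-preserving homeomorphism (being a Möbius automorphism of $\overline{\mathbb{D}}$), and since rotation number is invariant under orientation-preserving topological conjugacy of circle homeomorphisms, one concludes
\[
\rho(R)\equiv \epsilon\,\frac{\Phi(R)}{\pi}\pmod{\Z},
\]
where $\epsilon\in\{+1,-1\}$ records whether $M_R$ is $SU(1,1)$-conjugate to the rotation by $+2\phi_R$ or by $-2\phi_R$ (these two conjugacy classes are distinct because the swap matrix $\bigl(\begin{smallmatrix}0&1\\1&0\end{smallmatrix}\bigr)$ does not lie in $SU(1,1)$).

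Finally, on a connected component $I_E$ of the elliptic set, $\tr(M_R)$ is continuous in $R$ with values in $(-2,2)$, so $\Phi(R)$ is continuous with values in $(0,\pi)$. Both $\rho(R)$ (by its defining limits) and $\Phi(R)/\pi$ are continuous on $I_E$, and the preceding step shows that for the correct choice of $\epsilon$ the function $R\mapsto \rho(R)-\epsilon\Phi(R)/\pi$ is integer-valued on $I_E$. A continuous $\Z$-valued function on a connected set is constant, so both the sign $\epsilon$ and an integer $k$ are fixed throughout $I_E$, giving the stated formula. The main obstacle is ensuring the diagonalizing conjugation lies in $SU(1,1)$ itself (not merely in $GL(2,\C)$), so that it acts on $S^1$ as an orientation-preserving homeomorphism with a definite direction of rotation; this reduces to verifying that the eigenvectors of an elliptic $M_R\in SU(1,1)$ are non-null with respect to the indefinite hermitian form $|z_1|^2-|z_2|^2$, which follows from the $SU(1,1)$-orthogonality of eigenspaces for distinct non-real eigenvalues.
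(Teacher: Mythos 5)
Your proposal is correct and follows essentially the paper's route: you conjugate $M_R$ inside $SU(1,1)$ to $\mathrm{diag}(e^{i\phi_R},e^{-i\phi_R})$ (the paper builds the conjugator from the interior fixed point $z_0$ via the explicit $G_{z_0}$, you build it from the $J$-non-null eigenvectors, which is the same thing) and then read off the rotation number mod $\Z$ from the induced rotation $z\mapsto e^{2i\phi_R}z$ with $\cos\phi_R=\tfrac12\tr(M_R)$. The one glossed point is your claim that a single $\epsilon$ works on all of $I_E$ (pointwise the sign could a priori vary, e.g.\ where $\Phi(R)=\pi/2$): this follows because the eigenvalue attached to the interior fixed point is a continuous, non-real function on $I_E$, so the sign of its imaginary part is constant — a uniformity the paper's own proof also leaves implicit.
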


\Proof If $\M_R$ is elliptic, then $|\text{Re}(a_R)|<1$.
Let $z_0$ be the unique fixed point of $\mathcal{M}_R$ inside the unit open disk $\mathbb{D}$, i.e. $M_R\cdot z_0=z_0$. Define
$$
G=G_{z_0}\ :=\ \frac{1}{\sqrt{1-|z_0|^2}}\begin{pmatrix}1&-z_0\\ -\overline{z_0}&1\end{pmatrix}\in SU(1,1),\quad G_{z_0}\cdot z_0=0.
$$
So $GM_RG^{-1}\cdot 0=0$, it has to be of the diagonal form
\[
\hat{M}_R= \begin{pmatrix}
e^{i\th_R} & 0\\
0 & e^{-i\th_R}
\end{pmatrix}.
\]
That is, $\M_R$ is conjugated in $SU(1,1)$ to the M\"obius map
\[ \hat{M}_R\cdot z=e^{i 2\th_R}z.\]
It follows that the rotation number $\rho(R)$ modulo $\Z$ equals $2\th_R/2\pi$ .

On the other hand, since $e^{\pm i\theta_R}$ are the eigenvalues of the matrix $M_R$, they can be computed via
\[\lambda= \text{Re} (a_R)\pm  i\sqrt{1- (\text{Re}(a_R))^2},\]
hence $\th_R$ takes value in $\pm \operatorname{arccos} \!\left(\text{Re}(a_R)\right)+2k\pi $, where $\operatorname{arccos} \!\left(\text{Re}(a_R)\right)\in (0,\pi)$.

\qed

 \bb{thm}\lb{anal}
If $\rho(R) \notin \Z$, then the rotation number function $\rho$ is real analytic at $R$. In particular, $\rho(R)$ can not be locally constant in any connected component of the elliptic set.
    \end{thm}
\begin{proof}
     Suppose $\rho(R)\notin \Z$, then the monodromy map $\M_R$ has no fixed points, i.e. $\M_R$ is elliptic. By Proposition \ref{mono-02}, the rotation number is given by \x{ellip}. To prove that $\rho$ is analytic at $R$, it remains to show that $a_R$ is analytic in $R$.

     Denote $\lambda=1/R$, the monodromy equation \x{monodromy} writes as $ M'(t;\lambda)=-\lambda A(t)M(t;\lambda)$. By Picard iteration:
     \[M(t,\lambda)=\sum_{k=0}^{\infty}(-\lambda)^k M_k(t),\quad M_k(t)=\int_{0<t_1<\cdots<t_k<t }A(t_k)\cdots A(t_1)dt_1\cdots dt_k.\]
     Let $C:=\sup_{t\in[ 0,2\pi]} \|A(t)\|$, then
     \[\|M(t,\lambda)\|\leq \sum_{k=0}^{\infty} |\lambda|^kC^kt^k/k!=\exp(|\lambda| Ct).\]
     Thus $M(t,\lambda)$ is analytic in $\lambda$, whence in $R$ for $R\neq 0$.

     Therefore, the rotation number function $\rho(R)$ can not be locally constant in any connected component $I_E$ of the elliptic set. Otherwise, $\rho(R)$ will be constant in the whole $I_E$. But by the continuity of $\rho$, this implies $\rho(R)\in \Z$ for $R\in I_E$, which is a contradiction.
\end{proof} \qed
\bigskip

We end this section by an asymptotic expansion of $M_R$ using Magnus series, that is, the expansion for $\Omega_R:=\log M_R$. See the Appendix A for a brief introduction.

\begin{lem} \label{magnus-area} 
    For any piecewise $C^2$ closed curve $\Ga$ and $R > 0$, if $\ell(\Ga)/R$ is sufficiently small, then it admits the expansion for $\Omega_R=\log M_R$ :
    \[\Omega_R=\frac{\mathcal{A}(\Ga)}{R^2} J_0+ O(\ell(\Ga)^3R^{-3})\]
where $J_0=\mathrm{diag}(i/2,-i/2)$.
\end{lem}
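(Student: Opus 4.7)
The plan is to apply the Magnus expansion to the linear system \x{monodromy} on $[0,2\pi]$, identify the first two contributions explicitly, and bound the higher-order terms by the usual Magnus estimates.

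Writing \x{monodromy} in the form $M_R'=B(t)M_R$ with $B(t)=-A(t)/R$, the Magnus expansion reads
\[
\Omega_R=\Omega_1+\Omega_2+\sum_{k\ge 3}\Omega_k,
\]
where $\Omega_1=\int_0^{2\pi}B(t)\,dt$, $\Omega_2=\tfrac12\int_0^{2\pi}\int_0^{t_1}[B(t_1),B(t_2)]\,dt_2\,dt_1$, and each subsequent $\Omega_k$ is a $k$-fold iterated commutator integral of $B$. Since $A(t)=\tfrac12\begin{pmatrix}0 & w(t)\\ \bar w(t) & 0\end{pmatrix}$ with $w(t)=\xi'(t)+i\eta'(t)$, and $\Ga$ is closed, $\int_0^{2\pi}\xi'\,dt=\int_0^{2\pi}\eta'\,dt=0$, so $\Omega_1=0$ immediately.

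For $\Omega_2$, a direct computation of the commutator of two matrices of the above off-diagonal form yields
\[
[A(t_1),A(t_2)]=-\bigl(\xi'(t_1)\eta'(t_2)-\xi'(t_2)\eta'(t_1)\bigr)J_0.
\]
Performing the inner integration in $t_2$ replaces $\xi'(t_2),\eta'(t_2)$ by $\xi(t_1)-\xi(0),\eta(t_1)-\eta(0)$; the constant contributions again cancel by closedness of $\Ga$, so
\[
\int_0^{2\pi}\!\int_0^{t_1}\!\bigl(\xi'(t_1)\eta'(t_2)-\xi'(t_2)\eta'(t_1)\bigr)\,dt_2\,dt_1=\int_0^{2\pi}(\xi'\eta-\xi\eta')\,dt=-2\mathcal{A}(\Ga),
\]
by Green's theorem for the signed enclosed area. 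Hence $\Omega_2=\mathcal{A}(\Ga)R^{-2}J_0$, the leading term asserted in the lemma.

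For the tail $\sum_{k\ge 3}\Omega_k$, I would invoke the standard Magnus estimate (Appendix~A, or the Blanes--Casas--Oteo--Ros survey),
\[
\|\Omega_k\|\le C_k\Bigl(\int_0^{2\pi}\|B(t)\|\,dt\Bigr)^k=C_k\Bigl(\frac{\ell(\Ga)}{2R}\Bigr)^k,
\]
where $\|A(t)\|=\|\Ga'(t)\|/2$. Once $\ell(\Ga)/R$ is below the Moan--Niessen radius of convergence of the Magnus series, the tail is dominated by a convergent geometric series starting at $k=3$, yielding $\sum_{k\ge 3}\Omega_k=O(\ell(\Ga)^3R^{-3})$. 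The hard part is this uniform tail bound; the algebra identifying $\Omega_2$ with the enclosed area is routine once the off-diagonal structure of $A(t)$ is exploited, and the required smallness of $\ell(\Ga)/R$ in the hypothesis is precisely what allows the term-by-term control of the series.
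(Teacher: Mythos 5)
Your proposal is correct and follows essentially the same route as the paper: Magnus expansion of the monodromy equation, $\Omega_1=0$ by closedness of $\Gamma$, and identification of $\Omega_2=\mathcal{A}(\Gamma)R^{-2}J_0$ via the commutator computation and Green's theorem (your signs are consistent and the final coefficient is right). The only difference is the remainder control: the paper proves a self-contained two-term truncation with $O\big((\ell/R)^3\big)$ error by Picard iteration and the matrix-logarithm expansion (its Theorem \ref{thm:magnus-2}), while you invoke the standard term-wise bounds and convergence radius of the full Magnus series — an equally valid justification.
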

\begin{proof}Let us denote $A_R=-\frac{1}{2R} A(t)$ where $A(t)$ is defined in \x{monodromy}, and the period of $\Ga$ is $L$.  Since 
\[
\int_{0}^{L}||A_{R}(t)||dt = \frac{1}{2R}\int_{0}^{L}||\Ga'(t)||dt  = \frac{\ell(\Ga)}{2R},
\]
here we use the norm $||A|| = \max_{||x|| = 1}{||Ax||}$. Hence, if $\ell(\Ga)/R$ is sufficiently small, it follows from the two-term Magnus expansion in Theorem \ref{thm:magnus-2} that
\[\Omega_{R}=\Omega_{R,1}+\Omega_{R,2}+O(\ell(\Ga)^3R^{-3}).\]
where
    \[\Omega_{R,1}(L)=\int_0^L A_R(s)ds=0.\]
    by periodicity of $\Gamma$. And 
    \[\Omega_{R,2}(L)=\frac{1}{2}\int_{0<s_2<s_1<L}[A_R(s_1),A_R(s_2)]ds_2ds_1.\]
    A direct computation gives
    \[[A(s_1),A(s_2)]=4(\xi'(s_1)\eta'(s_2)-\eta'(s_1)\xi'(s_2))J_0.\]
The ordered double integral reduces to a line integral:
\[
\int_{0<t_2<t_1<L}(\xi'_1\eta'_2-\eta'_1\xi'_2)\,dt_2dt_1
=\int_0^{L}\big(\xi\,\eta'-\eta\,\xi'\big)\,dt
=\oint_{\Ga}\xi\,d\eta-\eta\,d\xi
=2\,\mathcal{A}(\Ga).
\]
Hence
\[
\Omega_{R,2}(L)=\frac {1}{R^2}\mathcal{A}(\Ga)J_0.
\]

\end{proof}


\ifl
Geometrically, system \x{SE} means that $\vr'$ is the projection of $\Ga'$ in the direction $\vr-\Ga$. The shadower $\vr(t)$ will point to (resp. oppose to) the escaper $\Ga(t)$ when $\alpha(t)>0$ (resp. $\alpha(t)<0$). When $\alpha(t)=0$ or $\Ga'(t) \cdot(\vr(t) -\Ga(t)) =0$, the shadower $\vr(t)$ will stop at these times.

With these explanations to the strategy that the shadower is always staring at the escaper, system \x{SE} of ODEs is called in this paper the {\it shadowing equation} (SE, for short) to (the EC) $\Ga$, and, meanwhile, the solutions of SE \x{SE} are called the {\it shadowing curves} (SC or SCs, for short) to $\Ga$ or to $\Ga(t)$. Moreover, for any SC $\vr(t)$ to $\Ga(t)$,
    \be \lb{sdr1}
    R:= \|\Ga(0)-\vr(0)\|\equiv \|\Ga(t)-\vr(t)\|>0
    \ee
is called the {\it bicycle length} of SC $\vr(t)$.

    \bb{thm}\lb{sys}
Let the EC $\Ga$ be given. Then, for any initial point $\vr(0)$ different from $\Ga(0)$, SE \x{SE} admits a unique globally defined solution
    \be \lb{vrtt}
    \vr=\vr(t)
    =\vr(t,\vr(0)), \qq t\in \R.
    \ee
Hence any solution \x{vrtt} defines a parameterized SC to $\Ga(t)$ with the bicycle length $R$ being defined by \x{sdr}.
    \end{thm}
\fi

\ifl
\hrule

In this paper, we mainly concentrate on the studying for shadowing curves on the Euclidean plane $\R^2$ when the escaping curves are planar closed curves. The content and results are as follows.

In \S \ref{second}, we will first briefly study the invariance properties on shadowing problems. Then we will deduce an extending shadowing equation \x{ESE} for general dimension which is a higher dimensional linear system. Finally, when the planar shadowing curves to planar escaping curves are considered, we will use the moving polar coordinates to deduce a reduced shadowing equation \x{RSE} which is a nonlinear differential equation on the circle.

The main content is given in \S \ref{third}. When the escaping curve is a planar closed curve $\Ga$ with some regularity, we use the reduced shadowing equation to introduce the rotation number $\ro(R)=\ro_\Ga(R)$ from dynamical systems theory \cite{A83, H80, KH95}, which is a function of the bicycle length $R\in(0,+\oo)$. It is proved in Lemma \ref{inv} that $\ro_\Ga(R)$ is independent of the parameterizations of $\Ga$, i.e. $\ro_\Ga(R)$ depends only on the geometry of $\Ga$. Moreover, we find that $\ro_\Ga(R)$ has closed connections with the perimeter of $\Ga$, the rotation index of  $\Ga$, and the area enclosed by $\Ga$. For details, see Lemma \ref{upbound}, Theorem \ref{rho-01} and Theorem \ref{mono}. By using rotation number $\ro(R)$, we will apply the dynamical behavior of circle diffeomorphisms,  including the Denjoy theorem, to give a fair complete characterization of types of planar shadowing curves. The main results are stated in Theorem \ref{main1} and Theorem \ref{main2}. Typically, we have the following three types of shadowing curves:
    \begin{itemize}
\item when the bicycle length $R$ is not too large, the shadowing problem admits only $2\pi$-periodic shadowing curves and those shadowing curves which are approaching to periodic ones.
\item when $R$ is large enough and $\ro(R)$ is rational, the shadowing problem admits subharmonic ($2p\pi$-periodic) shadowing curves and those shadowing curves which are approaching to subharmonic ones.
\item  when $R$ is large enough and $\ro(R)$ is irrational, \ysb{each shadowing curve is dense in the possible shadowing domain} $\D_R$ as in \x{Dr}, which is like an `annulus' depending only on $R$.
    \end{itemize}

In order to distinguish the ranges of these different bicycle lengths, we use the properties of rotation number $\ro_\Ga(R)$ to introduce two notions which are called the critical bicycle length $\mcr(\Ga)$ and the turning bicycle length $\ol{R}(\Ga)$. See Definition \ref{CSD} and Definition \ref{TSD} respectively. These notions depend only on the geometry of closed escaping curves $\Ga$.
It is proved in Theorem \ref{circ} that $\mcr(\Ga)$ is really different from the `perimeter' when $\Ga$ is not a circle.

In \S \ref{fourth}, by considering the unit circle as an escaping curve, we will examine all shadowing curves in details using the reduced shadowing equation. For this simplest example, the shadowing problem will admit beautiful shadowing curves. These will be plotted in Figures \ref{sccircles}-\ref{Erg4}. When the escaping curve is chosen an ellipse, we give some analytic and numerical analysis to possible shadowing curves.

In \S \ref{fifth}, we impose a conjecture, which asserts that the critical bicycle length $\mcr(\Ga)$ and the turning bicycle length $\ol{R}(\Ga)$ are coincident: $\mcr(\Ga)=\ol{R}(\Ga)$ for typical closed curves $\Ga$. Once this is true, we can give a compete characterization to all types of shadowing curves. As mentioned in \S \ref{third}, these quantities are related with the geometric properties of $\Ga$. Hence the conjecture may be of independent interest from the point of view of differentiable geometry.

Finally, although the paper contains several interesting results, it is just a beginning study for shadowing problems. Moreover, most of the proofs in this paper are not difficult from the point of view of dynamical systems.

\fi

\ifl Hence all RTs to circles can be obtained explicitly. One can find from \cite{M20, WZ22} that this simple example admits very interesting RTs.

For our purpose, let us calculate the rotation number $\ro(R)$ of Eq. \x{rse2}. Because of the relations \x{rse2}---\x{phi}, we first calculate the rotation number $\ro_\phi(R)$ of Eq. \x{phi}.

Case 1: $R\in(0,1)$. In this case, Eq. \x{phi} has two geometrically different equilibria
    \[
    \phi_R^+ = -\phi_R^-:=-\arccos(-R)\equiv -\z(\pi/2+\arcsin R\y).
    \footnote{These equilibria yield $2\pi$-periodic RTs $\ga_{R}^\pm(t) =\sqrt{1-R^2} \z( \cos (t\mp \arcsin R), \sin (t\mp \arcsin R)\y)$.}
    \]
These equilibria $\phi_R^\pm$ are constant solutions $\phi^\pm(t)\equiv \phi^\pm_R$ of Eq. \x{phi}. Hence, by the definition of rotation numbers, one has
    \be \lb{rn1}
    \ro_\phi(R) = \lim_{t\to \oo}\f{\phi^\pm(t)- \phi^\pm_R}{t}=0
    \ee

Case 2: $R=1$. In this case, the two equilibria shrink into a single equilibrium $\phi=\pi$. One still has $\ro_\phi(1)=0$.

Case 3: $R\in(1,+\oo)$. In this case, Eq. \x{phi} has no equilibrium because $\cos\phi+R>0$ for all $\phi\in\R$. Any solution $\phi(t)$ of Eq. \x{phi} satisfies
    \bea \lb{R3}
    -t \EQ -\int_0^t \dt = \int_{0}^{t} \f{R\phi'(t)}{\cos \phi(t)+R} \dt \nn\\
    \EQ \int_{\phi(0)}^{\phi(t)} \f{R}{\cos \vp+R} \rd\vp \nn\\
    \EQ H(\phi(t)) - H(\phi(0)),\qq t\in \R,
    \eea
where
    \[
H(\phi):= \int_0^\phi \f{R\rd\vp}{\cos \vp+R}, \qq \phi \in \R,
    \]
is a strictly increasing, odd diffeomorphism of $\R$. Explicitly, by introducing a constant
    \be \lb{rho}
    \rho = \rho_R:= \f{\sqrt{R^2-1}}{R}\in (0,1),
    \ee
one has
    \[\bb{split}
    H(\phi) & \equiv \f{2}{\rho} \arctan\z( \sqrt{\f{R-1}{R+1}} \tan \f{\phi}{2}\y)\qqf \phi\in(-\pi,\pi),
    \\
    H(k \pi) & = {k\pi}/{\rho}\qqf k\in \Z,
    \end{split}
    \]
and
    \be \lb{sec34}
    H(\phi+2k\pi) \equiv H(\phi) +  {2k\pi}/{\rho} \qqf k\in \Z, \ \phi\in\R.
    \ee
Now we have from \x{R3}
    \[
    \phi(t) \equiv H^{-1}\z( H(\phi(0))-t\y)\qqf  t\in \R.
    \]
The property \x{sec34} for $F$ is transferred to
    \[
    \phi(t+ 2k\pi/\rho) \equiv \phi(t)-2k\pi\qqf k\in \Z, \ t\in\R.
    \]
In particular, one has
    \[
    \f{\phi(2k\pi/\rho) -\phi(0)}{2k\pi/\rho}\equiv -\rho.
    \]
Therefore
    \be \lb{rn3}
    \ro_\phi(R) = -\rho, \qq R\in(1,+\oo).
    \ee

For general case $R\in(0,+\oo)$, one has from  \x{ths} that
    \(
    \th(t) = t+ \phi(t).
    \)
Thus
    \[
    \ro(R) = \lim_{t \to \oo} \f{\th(t)-\th(0)}{t} = 1 + \lim_{t \to\oo} \f{\phi(t)-\phi(0)}{t}= 1+\ro_\phi(R).
    \]
By using results \x{rn1}, \x{rho} and \x{rn3}, we have the following conclusion.
\fi

\section{Two new geometric invariants} \lb{prn}

Given a planar $C^2$ closed curve $\Gamma$, there are classical geometric invariants that are related by the rotation number function $\rho_{\Gamma}(R)$. We recall them in the following:
\begin{itemize}
    \item the perimeter:
    \[\ell=\ell(\Ga):=\int_0^{2\pi }\|\Ga'(t)\|\dt.\]
    \item the rotation index :
    \[\omega=\omega(\Gamma)=\f{1}{2\pi}\oint_{\Ga} \f{x\dy- y\dx}{x^2+y^2}\in \Z.\]
    \item the {\it algebraic area} enclosed by $\Ga$:
     \[\mathcal{A}= \A(\Ga):= \oint_\Ga x\dy := - \oint_\Ga y\dx =  \f{1}{2}\oint_\Ga x\dy-y\dx.\]
     \item the minimum and maximum radius of $\Ga$:
     \[R_{\min}(\Ga)=1/\max_t |\kappa(t)|,\quad R_{\max}(\Ga)=1/\min_t |\kappa(t)|.\]
where $\kappa(t)$ is the curvature function of $\Ga$.
\end{itemize}

For example, for closed curves $\mcc_n(t)=(\cos (n t), \sin (n t))$, $n\in \Z$, one has the  perimeters $\ell(\mcc_n) = 2|n| \pi$ and the algebraic areas $\A(\mcc_n) = n \pi$. For a directional figure-eight curve $F_8$ formed by two circles touching at the origin, the algebraic area $\A(F_8)$ can be any prescribed number by varying the areas of the two disks.
\bigskip

Since the rotation number $\rho_{\Gamma}(R)$ is defined independent of the parametrization of $\Gamma$, we will introduce two new geometric invariants that encode the bicycle dynamics from $\rho_{\Gamma}$:
\begin{itemize}
    \item the \emph{critical B-length} $\mcr(\Gamma)$:  the right end of the first plateau  of $\rho_\Gamma(R)$.
    \item the \emph{turning B-length} $\overline{R}(\Gamma)$ :  the left endpoint of the maximal interval on which $\rho_\Gamma(R)$ is strictly monotone.
\end{itemize}

\begin{prop}\lb{smallR}
Let $\Ga$ be a regular $C^2$ closed curve and $R_{\min}$ denote its minimal radius of curvature. Then
    \be \lb{rn-s01}
    0<R \le R_{\min} \q \Longrightarrow \q \ro_{\Ga}(R) \equiv \om=\om(\Ga).
    \ee
\end{prop}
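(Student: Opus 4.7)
The plan is to show that whenever $R\le R_{\min}$ the steering-angle equation \x{eq:angle} admits an $L$-periodic solution. For any such solution $\alpha(t)$, the formula $\theta=\alpha+\psi$ together with \x{Avp} gives $\theta(L)-\theta(0)=\psi(L)-\psi(0)=2\pi\omega$; the definition \x{rhoF} of the rotation number (which is intrinsic to the circle map, hence independent of the chosen orbit) then forces $\rho_\Gamma(R)=\omega$. By Lemma \ref{inv} I will parametrize $\Gamma$ by arc length from the outset, so that $B\equiv 1$ and \x{eq:angle} takes the clean form $\alpha'=\tfrac{1}{R}\sin\alpha-\kappa(s)$.

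\textbf{Step 1 (a positively invariant arc).} I claim the closed half-circle
\[
A:=\{\alpha\in\R/2\pi\Z:\ \pi/2\le\alpha\le 3\pi/2\}
\]
is positively invariant for \x{eq:angle} as soon as $R\le R_{\min}$, i.e.\ $R|\kappa(s)|\le 1$ for all $s$. At the boundary $\alpha=\pi/2$,
\[
f_R(s,\pi/2)=B(s)\Bigl(\tfrac{1}{R}-\kappa(s)\Bigr)\ \ge\ B(s)\Bigl(\tfrac{1}{R}-\max_\sigma|\kappa(\sigma)|\Bigr)\ \ge\ 0,
\]
and symmetrically $f_R(s,3\pi/2)=B(s)\bigl(-\tfrac{1}{R}-\kappa(s)\bigr)\le 0$. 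Thus the constants $\alpha\equiv\pi/2$ and $\alpha\equiv 3\pi/2$ are, respectively, a sub- and a super-solution of \x{eq:angle}, and the standard comparison principle for scalar ODEs (with $f_R$ Lipschitz in $\alpha$) shows that any solution entering $A$ stays in $A$ for all forward times. Geometrically, $A$ is precisely the arc which for every $s$ contains the instantaneous stable equilibrium $\alpha_2(s):=\pi-\arcsin(R\kappa(s))\in(\pi/2,3\pi/2)$, while the unstable one $\alpha_1(s):=\arcsin(R\kappa(s))$ lies in the complementary arc.

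\textbf{Step 2 (fixed point of the Poincar\'e map).} Let $\tilde{\mathcal P}_R:\R\to\R$ be the lift of $\mathcal P_R$. By Step 1, $\tilde{\mathcal P}_R$ sends the compact interval $[\pi/2,3\pi/2]$ into itself, hence $\tilde{\mathcal P}_R(\pi/2)\ge\pi/2$ and $\tilde{\mathcal P}_R(3\pi/2)\le 3\pi/2$. Continuity and the intermediate-value theorem applied to $\tilde{\mathcal P}_R-\mathrm{id}$ produce a genuine fixed point $\alpha^\ast\in[\pi/2,3\pi/2]$. The orbit through $\alpha^\ast$ is the sought $L$-periodic solution of \x{eq:angle}, and the computation in the first paragraph now yields $\rho_\Gamma(R)=\omega$.

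\textbf{The point requiring care.} The only delicate issue is the equality case $R=R_{\min}$: the inequalities at $\partial A$ may degenerate to equality at times where $|\kappa(s)|$ attains its maximum, so the invariance of $A$ is only weak there. This is still sufficient for the non-strict Lipschitz version of the comparison principle; alternatively, one can first carry out Steps 1--2 on the open range $0<R<R_{\min}$ (where both inequalities are strict) and then pass to the endpoint using the already noted continuity of $R\mapsto\rho_\Gamma(R)$. Either route closes the argument.
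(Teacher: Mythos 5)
Your argument is correct and essentially the paper's own: the paper likewise works with the steering-angle equation, checks the sign of $f_R(t,\alpha)$ at $\alpha=\pm\pi/2,\,3\pi/2$, deduces invariance of the corresponding intervals under the Poincar\'e map, and obtains fixed points, giving $\tilde\rho(R)=0$ and hence $\rho_\Gamma(R)=\omega$. The only differences are minor: the paper produces two fixed points (using both invariant arcs) and argues only for $R<R_{\min}$, whereas you need just one periodic solution and you explicitly close the borderline case $R=R_{\min}$ (via the non-strict comparison principle or continuity of $\rho$), a point the paper leaves implicit.
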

\begin{proof}
    We will consider the BCD equation \x{eq:angle} for the steering angle $\alpha$:
    \[\alpha'= B(t)\left(\frac{1}{R}\sin\alpha-\kappa(t)\right)=:f_R(t,\alpha)\]
 For $R\in (0,R_{\min})$, the vector field
    \[f_R(t,-\pi/2)=f_R(t,3\pi/2)<0,\quad f_R(t,\pi/2)>0\]
    It follows that the Poincar\'e map $\tilde{\mathcal{P}}: \alpha_0\mapsto \alpha(2\pi;\alpha_0)$ satisfies
    \[\tilde{\mathcal{P}}_R(\pi/2, 3\pi/2)\subset (\pi/2,3\pi/2),\quad \tilde{\mathcal{P}}_R^{-1}(-\pi/2,\pi/2)\subset (-\pi/2,\pi/2)\]
hence $\tilde{\mathcal{P}}_R$ has two fixed points, and $\tilde{\rho}(R)=0$. Therefore, $\rho(R)=\tilde{\rho}(R)+\omega=\omega$ for $R\in (0,R_{\min})$.
\end{proof}

    \bb{defn} \lb{CSD}

Suppose that $\Ga$ is a regular $C^2$ closed curve such that $\om=\om(\Ga) \ne 0$. Then  the following quantity
    \be \lb{csd}
    \underline{R}=\underline{R}(\Ga):= \sup \z\{R_*\in(0,+\oo): \ro_\Ga(R) \equiv \om \mbox{ for all } R\in (0,R_*] \y\}\in  (0,+\oo)
    \ee
is well-defined. It is called the {\it critical B-length} of $\Ga$.
    \end{defn}

 \begin{prop} \lb{circ}
Let $\Ga$ be a regular $C^2$ closed curve with $\omega=\omega(\Ga)\ne 0$. Then an upper bound for the critical B-length is given by
    \be \lb{ulr-u1}
    R_{min}(\Gamma)\leq \ul{R}(\Ga)\leq \frac{\ell(\Ga)}{2\pi |\omega|},
    \ee
where the right hand-side equality holds if and only if $\Ga $ is a circle.
    \end{prop}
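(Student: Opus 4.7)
I would split the statement into three parts: the lower bound, the upper bound (which en route establishes $\underline{R}(\Gamma)<+\infty$), and the equality characterization. The two bounds are immediate from earlier results, whereas the ``only if'' half of the equality carries the geometric content.

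For the lower bound $R_{\min}(\Gamma)\le\underline{R}(\Gamma)$, Proposition \ref{smallR} yields $\rho_\Gamma(R)\equiv\omega$ on $(0,R_{\min}]$, so the supremum in \eqref{csd} is at least $R_{\min}(\Gamma)$. For the upper bound I would first note $\underline{R}(\Gamma)<+\infty$: Lemma \ref{upbound} gives $\rho_\Gamma(R)\to 0$ as $R\to+\infty$, while $\omega\ne 0$, so the first plateau cannot be unbounded. By the continuity of $\rho_\Gamma$ the plateau is closed on the right, hence $\rho_\Gamma(\underline{R}(\Gamma))=\omega$, and Lemma \ref{upbound} applied at $R=\underline{R}(\Gamma)$ gives $|\omega|\le\ell(\Gamma)/(2\pi\underline{R}(\Gamma))$, i.e.\ \eqref{ulr-u1}.

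For the ``$\Leftarrow$'' half of the equality, suppose $\Gamma$ is a circle of radius $r$ traced $|\omega|$ times. Combining reparametrization invariance (Lemma \ref{inv}), the explicit formula for the unit circle (Lemma \ref{rhoc}), and the scaling identity (Corollary \ref{arnold}) gives $\rho_\Gamma(R)=\omega$ precisely on $(0,r]$, whence $\underline{R}(\Gamma)=r=\ell(\Gamma)/(2\pi|\omega|)$.

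For the ``$\Rightarrow$'' half, assume $\underline{R}(\Gamma)=R_0:=\ell/(2\pi|\omega|)$. Since $\rho_\Gamma(R_0)=\omega\in\Z$, the classification of M\"obius maps stated before Proposition \ref{mono-02} forces $\mathcal{M}_{R_0}$ to be hyperbolic or parabolic, so it has a fixed point on $S^1$. Lifting yields a solution $\theta^*(t)$ of \eqref{RSE} with $\theta^*(2\pi)-\theta^*(0)=2\pi\omega$, and the chain
$$|2\pi\omega|=\Bigl|\int_0^{2\pi}\theta^{*\prime}(t)\,dt\Bigr|\le\int_0^{2\pi}|\theta^{*\prime}(t)|\,dt\le\int_0^{2\pi}\frac{\|\Gamma'(t)\|}{R_0}\,dt=\frac{\ell}{R_0}=2\pi|\omega|$$
must collapse to equalities throughout. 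The first equality forces $\theta^{*\prime}$ to have constant sign, and the second is the Cauchy--Schwarz equality in $\xi'\sin\theta-\eta'\cos\theta\le\|\Gamma'\|$, which pins $(\sin\theta^*,-\cos\theta^*)$ parallel to $(\cos\psi,\sin\psi)$. Hence the steering angle $\alpha^*=\theta^*-\psi\equiv\pm\pi/2$, so $\alpha^{*\prime}\equiv 0$; substituting into \eqref{eq:angle} yields $\kappa(t)\equiv\pm 1/R_0$, and a closed $C^2$ curve of constant curvature is a circle of radius $R_0$. The main obstacle is this very step: one must upgrade the asymptotic identity $\rho=\omega$ into the existence of a bona fide periodic orbit (supplied by the hyperbolic/parabolic classification) in order to convert the integral inequality into an identity, and then disentangle two distinct equality cases---constancy of sign and Cauchy--Schwarz---to read off constancy of $\kappa$.
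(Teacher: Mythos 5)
Your proposal is correct and follows essentially the same route as the paper: lower bound from Proposition \ref{smallR}, upper bound from Lemma \ref{upbound} on the plateau, and the equality case via the existence of a periodic solution at $R^*=\ell/(2\pi|\omega|)$ (guaranteed because an integer rotation number forces a fixed point of the monodromy) which is then shown to have steering angle $\alpha\equiv\pm\pi/2$, hence constant curvature and a circle. The only cosmetic difference is in how the forcing is extracted: you saturate the Cauchy--Schwarz bound $|\theta'|\le\|\Gamma'\|/R_0$ in the $\theta$-equation, whereas the paper integrates the steering-angle equation \x{eq:angle} in arc-length to get $\tilde{\mathcal P}_{R^*}(\alpha_0)-\alpha_0=\omega\int_0^{2\pi}(\sin\alpha-1)\,ds\le 0$, so a fixed point forces $\sin\alpha\equiv 1$; your write-up is, if anything, more explicit about why the periodic orbit exists.
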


\Proof The left inequality follows from Proposition \ref{smallR}.
By reversing time if necessary, we assume that $\om>0$. For $R\leq \ul{R}(\Gamma)$, one has $\ro(R)=\omega$. It follows from Lemma \ref{upbound} that
    \[
    \omega=\ro(R)\leq \frac{\ell}{2\pi R},\quad R\leq \mcr(\Gamma)
    \]
Hence $\mcr(\Gamma)\leq \ell/2\pi\om$.

Now we consider the case $R^*=\ell/2\pi\om$. Take the normalized arc-length parametrization.
for $\Ga$ and plug it into the BCD Eq. \x{eq:angle}. We obtain
    \be\label{rse411}\alpha'(s)=\frac{2\pi \omega}{\ell}\sin\alpha-\kappa(s),\ee
Integration from $0$ to $2\pi$ in \x{rse411} gives
    \[
    \alpha(2\pi,\alpha_0)-\alpha_0=\frac{2\pi \omega}{\ell}\int_0^{2\pi}(\sin \alpha(s,\alpha_0)-1)ds \leq 0.
    \]
It follows that the rotation number $\tro(R^*)\leq 0$. In particular, the Poincar\'e map $\tilde{\mathcal{P}}_{R^*}$ has a fixed point if and only if $\alpha(s,\alpha_0)\equiv \pi/2$ whence $\kappa(s)\equiv 2\pi\omega/\ell$.
, i.e. $\Gamma$ is a circle.\qed

\begin{thm} [Isoperimetric-type inequality] If $\Ga$ is a $C^2$ strictly convex closed curve, then
 \be\lb{isoine} \sqrt{\frac{\A(\Ga)}{\pi}}\leq \mcr(\Ga)\leq \frac{\ell(\Ga)}{2\pi}.\ee
 where each equality in \x{isoine} holds if and only if $\Ga$ is a circle of radius $\mcr(\Ga)$.
\end{thm}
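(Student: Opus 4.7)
I would split the chain (\ref{isoine}) into its two inequalities and handle them separately, then reconcile the equality cases. The right-hand bound $\underline{R}(\Gamma)\le \ell(\Gamma)/(2\pi)$, together with the characterization that equality forces $\Gamma$ to be a circle, is already contained in Proposition \ref{circ}: a $C^2$ strictly convex closed curve has rotation index $|\omega(\Gamma)|=1$, so the estimate (\ref{ulr-u1}) collapses to precisely the right-hand inequality of (\ref{isoine}), and the circle-equality statement transfers verbatim.

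The substantive content is the lower bound $\sqrt{\mathcal{A}(\Gamma)/\pi}\le \underline{R}(\Gamma)$. Here I would invoke Menzin's conjecture --- now a theorem, proved in Foote--Levi--Tabachnikov \cite{FLS13} and revisited in \cite{BHT24} --- which asserts that for every $C^2$ strictly convex closed curve $\Gamma$ and every bicycle length $R$ with $R^{2}<\mathcal{A}(\Gamma)/\pi$, the monodromy map $\mathcal{M}_R$ admits a hyperbolic fixed point on $S^{1}$; in particular $\rho_\Gamma(R)\in\mathbb{Z}$. Set $R_{0}:=\sqrt{\mathcal{A}(\Gamma)/\pi}$. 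Proposition \ref{smallR} gives $\rho_\Gamma\equiv \omega=1$ on $(0,R_{\min}(\Gamma)]$, and Menzin's theorem forces $\rho_\Gamma(R)\in\mathbb{Z}$ for every $R\in(0,R_{0})$. Since $\rho_\Gamma$ is continuous and integer-valued on the connected interval $(0,R_{0})$, it must equal $1$ there, and by continuity also at $R_{0}$. Hence $\underline{R}(\Gamma)\ge R_{0}$ by Definition \ref{CSD}.

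The equality analysis is subtler. If $\Gamma$ is a circle of radius $r$, then $\mathcal{A}(\Gamma)=\pi r^{2}$ and $\ell(\Gamma)=2\pi r$; Lemma \ref{rhoc} gives $\underline{R}(\Gamma)=r$, so both inequalities in (\ref{isoine}) saturate simultaneously. Conversely, suppose $\Gamma$ is not a circle. The right equality is ruled out by Proposition \ref{circ}. For the left equality, I would appeal to the sharp form of Menzin's theorem: at the borderline $R=R_{0}$, the monodromy of a non-circular strictly convex $\Gamma$ is still strictly hyperbolic (the parabolic borderline being characteristic of circles). Hyperbolicity of $\mathcal{M}_R$ is an open condition in $R$, so the interval on which $\rho_\Gamma\equiv 1$ properly contains $R_{0}$, forcing $\underline{R}(\Gamma)>R_{0}=\sqrt{\mathcal{A}(\Gamma)/\pi}$.

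The principal obstacle is Menzin's conjecture itself, which I would use as a black box from the cited literature. Beyond that, the genuinely delicate step is the borderline analysis underlying the left equality case: one needs the sharp form of Menzin guaranteeing that strict hyperbolicity persists up to $R=R_{0}$ for every non-circular strictly convex $\Gamma$. If only the weak (non-strict) form is available, a backup plan is to combine the Magnus expansion (Lemma \ref{magnus-area}) with the classical isoperimetric inequality $4\pi\mathcal{A}(\Gamma)\le \ell(\Gamma)^{2}$ to extract the required rigidity at $R=R_{0}$ directly from the trace of $M_R(2\pi)$, with the equality analysis then reducing to the equality case of the isoperimetric inequality.
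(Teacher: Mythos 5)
Your proposal takes essentially the same route as the paper: the right-hand bound and its equality case come from Proposition \ref{circ} with $|\omega|=1$, the left-hand bound from Menzin's conjecture for convex curves as proved in \cite{FLS13}, and the borderline rigidity you call the ``sharp form of Menzin'' is precisely the support-function characterization from \cite{FLS13} that the paper invokes (at the parabolic borderline the closed rear track has support function $a\cos\varphi+b\sin\varphi$, hence degenerates to a point and $\Gamma$ is a circle of radius $\underline{R}(\Gamma)$), so that ingredient is available and your continuity/openness argument goes through. The only unsound part is your backup plan: Lemma \ref{magnus-area} is valid only when $\ell(\Gamma)/R$ is small, whereas $R_{0}\le \ell(\Gamma)/(2\pi)$ forces $\ell(\Gamma)/R_{0}\ge 2\pi$, so the Magnus expansion cannot be used at the borderline --- but this is moot since your primary argument coincides with the paper's.
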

\begin{proof}
    Suppose $\Ga$ is a strictly convex simple closed curve, then $|\om|=1$ and $\kappa>0$.


The left inequality in \x{isoine} comes from Menzin's conjecture for strictly convex curves: if $\A(\Ga)>\pi R^2$, then the monodromy map $\M_R$ is hyperbolic (\cite{FLS13}). Indeed, $\mcr(\Gamma)$ is the smallest length such that the monodromy turns from hyperbolic to parabolic, thus
\[\mcr(\Ga)\geq \sqrt{\frac{\A(\Gamma)}{\pi}}.\]
The equality holds, if and only if the support function of the closed rear track at $\mcr(\Gamma)$ is of the form $p(\varphi)= a\cos\varphi+b\sin\varphi$ (see \cite{FLS13}), which implies $\gamma$ is a single point, and $\Ga$ is a circle of radius $\mcr$ centered at $\ga$.

\end{proof}\qed

 \bb{prop} \lb{convex}
   Assume that $\Gamma$ is a $C^2$ strictly convex closed curve with $\omega(\Gamma)=1$. Then
   \begin{enumerate}
    \item The monodromy map $\mathcal{M}_R$ is hyperbolic for $R \in (0,\mcr(\Gamma))$.
    \item  For all $R > \mcr(\Gamma)$, one has $\rho(R) < 1$.
   \end{enumerate}

   \end{prop}

\Proof Write $\mcr=\mcr(\Gamma)$. We will work on the BCD equation \x{eq:angle}.  Notice that, any periodic solution curve of \x{eq:angle} must be restricted in the strip $S:=[0,2\pi]\times (0,\pi)$ in $(t,\alpha)$-plane. Indeed,  for $\alpha\in [\pi,2\pi]$, the vector field $f_R(t,\alpha)<0$, solutions are strictly decreasing, hence a periodic solution  $\alpha(t;\alpha_0)$ has to start with initial position $\alpha_0\in (0,\pi)$; and the solution curve is trapped in $S$  since at the boundary, $f_R(t,0)=f_R(t,\pi)<0$. Also,
    \begin{equation}\label{eq:cmp}
        f_R(t,\alpha)<f_{R'}(t,\alpha),\quad \alpha\in (0,\pi),\quad R>R'.
    \end{equation}

  1. For $R\in (0,\mcr)$, by \x{eq:cmp} the vector fields $f_R$ at the periodic solution curve $\{t,\alpha_{\mcr}(t,\alpha_0)\}$ satisfies
\[f_R(t,\alpha_{\mcr}(t,\alpha_0))>f_{\mcr}(t,\alpha_{\mcr}(t,\alpha_0)).\]
Combining with the fact that $f_R(t,0)=f_R(t,\pi)<0$, one gets
\[\tilde{\mathcal{P}}_R(\alpha_0,\pi)\subset (\alpha_0,\pi),\quad \tilde{\mathcal{P}}_R^{-1} (0,\alpha_0)\subset (0,\alpha_0).\]
Thus $\tilde{\mathcal{P}}_R$ has two fixed points.

2. By definition, $\tilde{\rho}(\mcr)=0$. Let $\alpha_{\mcr}(t;\alpha_0)\in (0,\pi)$ be a $2\pi$-periodic solution.
    Suppose $R>\mcr$, by  (\ref{eq:cmp}), the solution curve $\alpha_R(t;\alpha_0)$ lies below
    $\alpha_{\mcr}(t;\alpha_0)$ for $t$ small  and can not cross it for any $t\in [0,2\pi]$. Hence the Poincar\'e map of \x{eq:angle} verifies
    \begin{equation}\label{eq:small}
        \tilde{\mathcal{P}}_R(\alpha_0)\leq \tilde{\mathcal{P}}_{\mcr}(\alpha_0)=\alpha_0.
    \end{equation}

    It follows that $\tro(R)\leq 0$ for $R>\mcr$. Suppose that there is $R_1>\mcr$ such that $\tro(R_1)=0$, that implies, there exists a $2\pi$-periodic solution $\alpha_{R_1}(t;\alpha_1)$ which lies in $(0,\pi)$. By (\ref{eq:cmp}), for all $R<R_1$, the solution curve $\alpha_{R}(t;\alpha_1)$ always lies above $\alpha_{R_1}(t;\alpha_1)$ for $t\in [0,2\pi]$. In particular, $\tilde{\mathcal{P}}_R(\alpha_1)\geq \alpha_1$ and $\tro(R)\geq 0$. Thus $\tro(R)=0$ for $R\in (\mcr,R_1)$, which contradicts with the definition of $\mcr$. Hence $\tro(R)<0$ for all $R>\mcr$.
\qed

  \bb{prop} \lb{asym}
Assume that $\Ga$ is a regular $C^2$ closed curve. Then
    \be \lb{rho-ra}
    \ro(R) = \f{\A(\Ga)}{2\pi R^2} +O\z(\f{1}{R^3}\y)\qq \mbox{as } R\to +\oo,
    \ee
where $\A(\Ga)$ is the algebraic area enclosed by $\Ga$.  In particular, $\ro(R)$ is strictly decreasing (resp. increasing) if $\A(\Ga)>0$ (resp. $\A(\Ga)<0$) for $R$ large enough.%
    \end{prop}

\Proof We use the Magnus expansion to study the asymptotic behavior of $\ro(R)$ as $R\to +\oo$. By Lemma \ref{magnus-area}, the monodromy map $M_R$ is given by
\[M_R=\exp (h(R)J_0+O(R^{-3})),\qquad h(R)= \frac{\mathcal{A}(\Ga)}{R^2}.  \]
where $J_0=\mathrm{diag}(i/2,-i/2)$, and $O(R^{-3})$ is a $2\times 2$ matrix whose matrix norm being $O(R^{-3})$.

The eigenvalues of $M_R$ are given by
\[\lambda_\pm=\exp\left(\pm i\frac{\mathcal{A   }(\Ga)}{2 R^2}+O(R^{-3})\right).\]
Thus the rotation number $\ro(R)$ satisfies
\[\ro(R)=\frac{2\arg(\lambda_+)}{2\pi}=\frac{\mathcal{A}(\Ga)}{2\pi R^2}+O(R^{-3}).\]
This proves \x{rho-ra}. In particular, for $R$ large enough, $\rho(R) \to 0$, hence $\rho(R)$ is analytic by Corollary \ref{anal} and
\[\rho'(R)=-\frac{\mathcal{A}(\Ga)}{\pi R^3}+ O(\frac{1}{R^4})\]
It follows the strict monotonicity of $\rho$ for $R$ large enough, depending on the sign of $\A$ .

\qed

\bigskip

Let us introduce the following hypothesis $\bf{(H)}$ on regular $C^2$ closed curve $\Ga$:
    \[
    {\bf{(H)}}: \qq \om=\om(\Ga) \ne 0,
    \andq \A=\A(\Ga)\ne 0.
    \]

    \bb{rmk} \lb{Hy}
The hypothesis $\bf{(H)}$ is always verified if $\Ga$ is a non trivial Jordan curve (a simple closed curve).  On the contrary,  it can be the case  that $\omega_0=0$ if $\Ga$ is a figure eight formed by two circles touching at the origin, and $\mathcal{A}_0$ any prescribed number by varying the areas of the two disks.
    \end{rmk}

    \bb{defn}\lb{TSD}
{\rm
For any  regular $C^2$ closed curve $\Ga$ satisfying hypothesis $\bf{(H)}$, we define the {\it turning B-length}
to $\Ga$ as
    \[
    \overline{R}=\overline{R}(\Ga):=\inf \{R^*\in [\underline{R},+\infty):  \ro(R)\neq \omega \,\text{and is monotone in }\, R\in (R^*,+\infty)\}.
    \]
}
    \end{defn}

One sees that
    \be \lb{RR0}
    0<\ul{R}(\Ga) \le \overline{R}(\Ga)<+\oo
    \ee
for any $C^2$ regular closed curve $\Ga$ satisfying hypothesis $\bf{(H)}$.

  \bb{exa} \lb{exa1}

Suppose that the front tire track is an ellipse
    \[\E_b=(\cos t, b \sin t),\quad b>0\]
The curvature function of $\E_b$ is
\[\kappa(t)=\f{b}{\z(\sqrt{\sin^2 t + b^2 \cos^2 t}\y)^3}.\]

 Let us choose $b=2$. One has
    \[
    R_{\min}(\E_2) =\f{1}{2}, \qq R_
    {\max}(\E_2) = 4,\quad  \sqrt{\f{\A(\E_2)}{\pi}}=\sqrt{2} , \qq \f{\ell(\E_2)}{2\pi} =\f{4 {\mathbb E}(3/4)}{\pi}
    \]

We have plotted in Figure \ref{rnb2} the function of rotation numbers $\ro_2(R)$.  Numerically, the critical B-length of $\E_2$ is
    \[
    \mcr(\E_2)=\overline{R}(\E_2)\thickapprox 1.445.
    \]
    \end{exa}

\begin{figure}[ht]
\centering
\includegraphics[width=8cm, height=5cm]{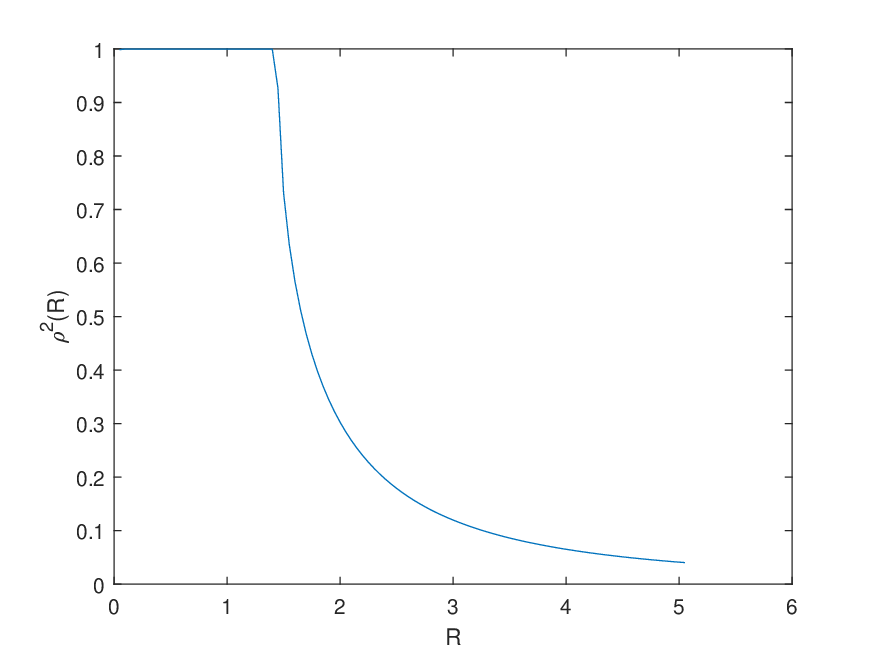}
\caption{Rotation numbers $\ro_b(R)$, $b=2$, as a function of $R$. }
\label{rnb2}
\end{figure}

\section{Uniqueness of plateau for star-shaped curves}
\setcounter{equation}{0} \lb{fifth}
In the previous sections, we have seen that the rotation number function $\rho_{\Gamma}$ has a first plateau in $(0,\mcr(\Gamma))$ and strictly decreases (if $\A(\Gamma)>0$) for $R>\overline{R}(\Gamma)$. A natural question is that for which curves there is a unique plateau, which is the case for circles whose rotation number function could be explicitly computed, and for ellipses with numerical simulation.

 We refer the following conjecture in \cite{WZ22}:
    \bb{conj} [\cite{WZ22}]\lb{conj1}
For any smooth strictly convex closed curve $\Ga$ on the plane, there holds
    \[
    \ul{R}(\Ga)= \ol{R}(\Ga).
    \]
    \end{conj}

  The coincidence of the two invariants $\underline{R}(\Gamma)$ and $\overline{R}(\Gamma)$ is closely related to the conjecture about the
  types of the monodromy maps raised in \cite{BHT24}.
    \bb{conj} [\cite{BHT24}] \lb{conj2}
For any smooth strictly convex closed curve $\Ga$, there exists a number $R_0$ such that if $R<R_0$ then the monodromy $M_R$ is hyperbolic, and if $R>R_0$ it is elliptic.
    \end{conj}
  
We will show that Conjecture \ref{conj1} implies \ref{conj2} and that they are true for a larger class of curves: the star-shaped curves. 

A curve is star-shaped with respect to a point $z_0$ if every ray originating from $z_0$ intersects the curve exactly once.

Define  the {\it areal velocity} of a curve $\Gamma$  by 
\[a(t)=a_{\Gamma}(t):=\det (\Gamma(t),\Gamma'(t)).\]

\begin{lem}
    Let $\Gamma:[0,T]\to\mathbb R^2\!\setminus\!\{0\}$ be a $C^1$ closed curve with rotation index $\omega(\Gamma)=1$. Then the following are equivalent:
 \begin{enumerate}
    \item $\Gamma$ is star-shaped around the origin;
    \item The area velocity $a(t)$ of $\Gamma$ is positive, i.e. $a(t) > 0$ for all $t$.
 \end{enumerate}

\end{lem}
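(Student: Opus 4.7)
The plan is to work in polar coordinates. Since $\Gamma(t)\ne 0$, lift the argument of $\Gamma(t)$ to a continuous function $\varphi(t)$ and write $\Gamma(t)=r(t)e^{i\varphi(t)}$ with $r(t)=|\Gamma(t)|>0$; both $r$ and $\varphi$ are $C^{1}$. A direct computation yields
\[
\Gamma'(t)=\bigl(r'(t)+ir(t)\varphi'(t)\bigr)e^{i\varphi(t)},\qquad a(t)=\det(\Gamma(t),\Gamma'(t))=r(t)^{2}\varphi'(t),
\]
so $\operatorname{sign} a=\operatorname{sign}\varphi'$. Setting $g(t):=r'(t)+ir(t)\varphi'(t)$, regularity of $\Gamma$ ensures $g\ne 0$, and a continuous argument of $\Gamma'$ is $\psi(t)=\varphi(t)+\arg g(t)$. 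Integrating over one period gives the bridge between the rotation index and the angular coordinate:
\[
\omega(\Gamma)=n(\Gamma)+k,
\]
where $n(\Gamma):=(\varphi(T)-\varphi(0))/(2\pi)\in\mathbb{Z}$ is the winding of $\Gamma$ about the origin and $k\in\mathbb{Z}$ is the winding of the auxiliary loop $g$ about the origin.

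For $(2)\Rightarrow(1)$: the assumption $a>0$ gives $\varphi'>0$, hence $\operatorname{Im} g=r\varphi'>0$, placing $g$ in the open upper half-plane. That region being simply connected, $k=0$, so $n(\Gamma)=\omega(\Gamma)=1$. Thus $\varphi$ strictly increases by exactly $2\pi$ per period, and $t\mapsto e^{i\varphi(t)}$ is a $C^{1}$ bijection from the parameter circle onto $S^{1}$; every ray from the origin meets $\Gamma$ at exactly one point, so $\Gamma$ is star-shaped around the origin.

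For $(1)\Rightarrow(2)$: star-shapedness makes $t\mapsto e^{i\varphi(t)}$ a continuous bijection of $S^{1}$, so $\varphi$ is strictly monotone and $|n(\Gamma)|=1$. Constancy of the sign of $\varphi'$ keeps $g$ in one closed half-plane, which pins down $k=0$; combined with $\omega(\Gamma)=1$ this gives $n(\Gamma)=1$, so $\varphi$ is increasing. Strict positivity of $\varphi'$ is then obtained by reparametrizing $\Gamma$ directly by polar angle: star-shapedness produces a $C^{1}$ radial function $\rho(\varphi)>0$ with $\tilde\Gamma(\varphi)=\rho(\varphi)e^{i\varphi}$, for which $a=\rho^{2}>0$ trivially, and the original parametrization is related by an orientation-preserving $C^{1}$ change of parameter, under which positivity of $a$ is preserved.

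The main obstacle is precisely this strict positivity in $(1)\Rightarrow(2)$: in $C^{1}$ regularity a strictly monotone $\varphi$ may have isolated critical points (e.g.\ $\varphi(t)=t^{3}$), where the polar reparametrization fails to be a diffeomorphism. This is handled either by invoking the paper's standing $C^{2}$ convention, or by arguing directly that at an isolated zero $\varphi'(t_{0})=0$ the regularity of $\Gamma$ forces a radial tangent line through $\Gamma(t_{0})$, yielding a local obstruction to the one-ray-one-intersection condition and hence a contradiction with star-shapedness.
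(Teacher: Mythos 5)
Your direction $(2)\Rightarrow(1)$ is correct and is actually more complete than the paper's own argument: the paper simply asserts that $\omega(\Gamma)=1$ forces the polar angle to increase by exactly $2\pi$ per period, whereas you justify this via the decomposition $\omega(\Gamma)=n(\Gamma)+k$ and the observation that $g=r'+ir\varphi'$ stays in the open upper half-plane, so $k=0$. That bridge is exactly what is needed if $\omega$ is read as the tangent rotation index of Section 2 rather than the winding number about the origin, so this half of your proposal is fine.

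The genuine gap is in $(1)\Rightarrow(2)$, and neither of your proposed repairs closes it. Under the paper's ray-counting definition of star-shaped, a radial tangent at a point with $\varphi'(t_0)=0$ creates no violation of the one-ray-one-intersection condition, and raising the regularity to $C^2$ (or $C^\infty$) does not help. Concretely, take
\[
\Gamma(t)=\Bigl(1+\tfrac12\sin t\Bigr)\bigl(\cos(t-\sin t),\,\sin(t-\sin t)\bigr),\qquad t\in[0,2\pi].
\]
This curve is smooth and regular (indeed $\|\Gamma'(0)\|=\tfrac12$), its polar angle $\varphi(t)=t-\sin t$ is a strictly increasing bijection of $[0,2\pi)$, so every ray from the origin meets the curve exactly once; moreover $g=\tfrac12\cos t+i(1+\tfrac12\sin t)(1-\cos t)$ never vanishes and has winding number $0$, so $\omega(\Gamma)=1$ under either reading. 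Yet $a(0)=r(0)^2\varphi'(0)=0$. So with the purely set-theoretic definition one can only conclude $a\ge 0$, and the ``local obstruction'' you invoke at a radial tangent simply does not exist. The paper's proof of this direction sidesteps the issue by passing to the polar-angle parametrization $\Gamma(t)=r(t)(\cos t,\sin t)$, i.e.\ by implicitly reading star-shapedness as ``$\Gamma$ is a regular polar graph''; under that reading $a=r^2>0$ is immediate, and positivity then transfers to any regular $C^1$ reparametrization because regularity forces $\varphi'\neq 0$ there. To make your write-up correct you must either adopt that stronger reading of star-shapedness or weaken conclusion $(2)$ to $a(t)\ge 0$, which is all that the later applications (e.g.\ Proposition \ref{main1}) actually require.
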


\begin{proof}
    Suppose $\Gamma$ is star-shaped around the origin, then $\Gamma$ has a parametrization\[\Gamma(t)=r(t)(\cos t,\sin t),\quad t\in [0,2\pi]\]
    then
    \[a(t)=\det (\Gamma(t),\Gamma'(t))= r(t)^2 > 0.\]
    On the other hand, suppose $\Gamma(t)$ is a curve with positive areal velocity. Write $\Gamma(t)$ in polar coordinates:
    \[\Gamma(t)=r(t)(\cos\theta(t),\sin \theta(t)).\]
    then 
    \[a(t)= r(t)^2\theta'(t).\]
Positive areal velocity implies $\theta'(t)> 0$. Because $\omega(\Gamma)=+1$, the total angle increase is
$\theta(T)-\theta(0)=2\pi$. Hence $\theta$ is an increasing continuous lift whose range has length
$2\pi$, i.e. $\theta$ provides a polar–angle parametrization of $\Gamma$, which is the star-shaped property. 

\end{proof}
\qed

\bigskip
We recall some basic facts about the Lie group $SU(1,1)$ and its Lie algebra $\mathfrak{su}(1,1)$. Denote $J=\mathrm{diag}(1,-1)$, then
\[SU(1,1)=\{S\in GL_2(\C)|S^*JS=J\},\]
and the matrices in $SU(1,1)$ preserve the Hermitian product
\[\langle u,v\rangle_J= u^*Jv.\]
where $^*$ denotes the Hermitian conjugate.

The Lie algebra $\mathfrak{su}(1,1)$ is composed of matrices of the form
\begin{equation}\label{eq:xform}
    X = \begin{pmatrix} i\alpha & \beta + i\gamma \\ \beta - i\gamma & -i\alpha \end{pmatrix},\quad \alpha,\beta,\gamma\in\mathbb{R}.
\end{equation}

Let
\begin{equation}\label{eq:generators}
J_0=\frac{i}{2}\begin{pmatrix}1&0\\0&-1\end{pmatrix},\quad
S=\frac12\begin{pmatrix}0&1\\1&0\end{pmatrix},\quad
T=\frac12\begin{pmatrix}0&i\\-i&0\end{pmatrix}.
\end{equation}
Then $\mathfrak{su}(1,1)=\mathrm{span}_\R\{J_0,S,T\}$, and
\[[J_0,S]=T,\quad [J_0,T]=-S,\quad [S,T]=-J_0.\]
Equip $\mathfrak{su}(1,1)$ with the the bilinear form
\[
\langle X,Y\rangle:=-\frac12\,\tr(XY),
\]
for which 
\begin{equation}\label{eq:gener}
    \langle J_0,J_0\rangle=\tfrac14>0,\quad \langle S,S\rangle=\langle T,T\rangle=-\tfrac14,\quad 
\langle J_0,S\rangle=\langle J_0,T\rangle=\langle S,T\rangle=0.
\end{equation}
The bilinear form is $\Ad$-invariant with the adjoint action $\text{Ad}_g X= gXg^{-1}$.

\medskip




\bigskip

Denote $\lambda=1/R$. Consider the monodromy ODE on $SU(1,1)$:
\[U_\lambda'(t,s)=-\lambda\,A(t)\,U_\lambda(t,s),\ U_\lambda(s,s)=I,
\]
where 
\[
A(t)=\xi'(t)\,S+\eta'(t)\,T.\]

Denote the monodromy matrix by $M_{\lambda}:=U_{\lambda}(2\pi,0)$ . On any connected component of the elliptic set, by Proposition \ref{mono-02}, there exists $G_{\lambda}\in SU(1,1)$ and an analytic function $\theta(\lambda)$, such that 
\[G_{\lambda} M_{\lambda} G_{\lambda}^{-1}=\exp(\theta(\lambda) 2J_0).\]  Define
    \[\widehat{J}(\lambda)=G_{\lambda}^{-1} (2J_0) G_{\lambda}\in \mathfrak{su}(1,1) .\]
 then $M_{\lambda}$ writes as
 \[
M_\lambda=\exp\!\big(\theta(\lambda)\,\widehat J(\lambda)\big),\qquad
\langle \widehat J(\lambda),\widehat J(\lambda)\rangle=1.
\]

\begin{prop} \label{main1}Suppose $\Gamma$ is a $C^2$ star-shaped closed curve with $\omega(\Gamma)=1$. Then on any connected component $I_E$ of the elliptic set,
\[
\theta'(\lambda)=\lambda \int_{0}^{2\pi}\!a(s)\big\langle
\Ad_{\,U_\lambda(2\pi,s)^{-1}}\widehat J(\lambda),\,J_0\big\rangle\,ds,\quad \lambda\in I_E.
\]
In particular, $\theta(\lambda)$ is strictly increasing on $I_E$:
\[\theta'(\lambda)\geq \lambda \mathcal{A}(\Gamma),\quad \forall \lambda\in I_E.\]

\end{prop}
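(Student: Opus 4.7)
The plan is to combine a Duhamel-type formula for $\partial_\lambda M_\lambda$ with an integration by parts in $s$ driven by the Lie-algebra structure of $\mathfrak{su}(1,1)$. I first establish the projection identity
\[
\theta'(\lambda)=\langle M_\lambda^{-1}\partial_\lambda M_\lambda,\widehat{J}(\lambda)\rangle.
\]
Indeed, differentiating $M_\lambda=\exp(\theta(\lambda)\widehat{J}(\lambda))$ via the standard $d\exp$ formula and using the normalization $\langle\widehat{J},\widehat{J}\rangle\equiv 1$ (so $\widehat{J}'(\lambda)\perp\widehat{J}(\lambda)$), the relation $[\widehat{J},\widehat{J}]=0$, and the $\mathrm{ad}$-skewness of the invariant form, every contribution involving $\widehat{J}'(\lambda)$ drops out when paired with $\widehat{J}$, leaving exactly the scalar $\theta'$.

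Next, variation of parameters for the linear ODE $U_\lambda'(t,0)=-\lambda A(t)U_\lambda(t,0)$ yields $M_\lambda^{-1}\partial_\lambda M_\lambda=-\int_0^{2\pi}\Ad_{U_\lambda(s,0)^{-1}}A(s)\,ds$. Writing $U_\lambda(s,0)^{-1}=M_\lambda^{-1}U_\lambda(2\pi,s)$ and using $\Ad_{M_\lambda}\widehat{J}=\widehat{J}$ (since $\widehat{J}$ commutes with $\exp(\theta\widehat{J})$), the $M_\lambda^{-1}$ drops out and
\[
\theta'(\lambda)=-\int_0^{2\pi}\langle A(s),Z(s)\rangle\,ds,\qquad Z(s):=\Ad_{U_\lambda(2\pi,s)^{-1}}\widehat{J}(\lambda).
\]
Now introduce $P(s):=\xi(s)S+\eta(s)T\in\mathfrak{su}(1,1)$. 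A direct computation using $[S,T]=-J_0$ gives the two algebraic identities $P'(s)=A(s)$ and $[P(s),A(s)]=-a(s)J_0$. Since $U_\lambda(2\pi,s)^{-1}$ satisfies the same linear ODE in $s$, one has $Z'(s)=\lambda[Z(s),A(s)]$, and combining these with the cyclic identity $\langle P,[Z,A]\rangle=\langle Z,[A,P]\rangle$ produces the Leibniz-type formula
\[
\frac{d}{ds}\langle P(s),Z(s)\rangle=\langle A(s),Z(s)\rangle+\lambda\,a(s)\,\langle Z(s),J_0\rangle.
\]
Integrating over $[0,2\pi]$ and invoking $P(2\pi)=P(0)$ (closedness of $\Gamma$) together with $Z(0)=\Ad_{M_\lambda^{-1}}\widehat{J}=\widehat{J}=Z(2\pi)$ ($\widehat{J}$ being $M_\lambda$-invariant), the boundary terms cancel and the first assertion of the proposition drops out.

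For the monotonicity bound, expand $Z(s)=c_1(s)J_0+c_2(s)S+c_3(s)T$; the normalization $\langle Z,Z\rangle=1$ reads $c_1^2-c_2^2-c_3^2=4$, hence $|c_1(s)|\ge 2$. Because $\widehat{J}(\lambda)=\Ad_{G_\lambda^{-1}}(2J_0)$ lies in the connected component of the positive cone $\{\langle X,X\rangle>0\}$ containing $2J_0$ (where $c_1>0$), and $\Ad_g$ for $g\in SU(1,1)$ preserves this component (as $SU(1,1)$ is connected), $Z(s)$ remains there; so $c_1(s)\ge 2$ and $\langle Z(s),J_0\rangle=c_1(s)/4\ge 1/2$. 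The star-shaped hypothesis ensures $a(s)>0$ by the preceding lemma, and $\mathcal{A}(\Gamma)=\tfrac12\int_0^{2\pi}a(s)\,ds$, so
\[
\theta'(\lambda)\ge\frac{\lambda}{2}\int_0^{2\pi}a(s)\,ds=\lambda\,\mathcal{A}(\Gamma)>0,
\]
giving the claimed strict monotonicity. The main delicate point is verifying the genuine cancellation of the boundary terms $\langle P(0),\widehat{J}\rangle$, which crucially uses both the closedness of $\Gamma$ and the commutativity $[\widehat{J}(\lambda),M_\lambda]=0$; without either, the identity acquires spurious endpoint contributions. Identifying the correct component of the positive cone is a minor but essential topological check.
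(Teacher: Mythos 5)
Your proposal is correct and follows essentially the same route as the paper: the projection identity $\theta'(\lambda)=\langle M_\lambda^{-1}\partial_\lambda M_\lambda,\widehat J(\lambda)\rangle$ (equivalent to the paper's right-trivialized version via $\Ad_{M_\lambda}\widehat J=\widehat J$), the Duhamel/variation-of-parameters formula, integration by parts against the primitive $\xi S+\eta T$ with the commutator identity producing $a(s)J_0$, and the normalization $\langle Z,Z\rangle=1$ forcing $|c_1|\ge 2$. The only (minor) divergence is the final positivity step: you invoke connectedness of $SU(1,1)$ so that $\Ad$ preserves the future component of the timelike cone containing $2J_0$, whereas the paper verifies positivity by the direct computation $y(s)=2\,\mathrm{Im}\langle e_1,Y(s)e_1\rangle_J=2\|\tilde U e_1\|^2>0$; both arguments are valid and give $\langle Z(s),J_0\rangle\ge \tfrac12$, hence $\theta'(\lambda)\ge\lambda\,\mathcal{A}(\Gamma)$.
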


\begin{proof}
   
\emph{Step 1.} Recall that the derivative of a matrix exponential $e^{A(\lambda)}$ is given by:$$\frac{d}{d\lambda} e^{A(\lambda)} = \int_0^1 e^{sA(\lambda)} A'(\lambda) e^{(1-s)A(\lambda)} ds$$
Plug in $A(\lambda)=\theta(\lambda)\widehat{J}(\lambda)$,
$$\partial_{\lambda}M_{\lambda}M_{\lambda}^{-1} = \int_0^1 \exp(s\theta\hat{J}) \left( \theta'\hat{J} + \theta\hat{J}' \right) \exp(-s\theta\hat{J}) ds$$
Take the
$\langle\cdot,\cdot\rangle$ pairing with \(\widehat J\):
\beaa 
\langle\widehat{J},\partial_{\lambda}M_{\lambda}M_{\lambda}^{-1}\rangle &=& \theta' \langle \widehat{J}, \widehat{J} \rangle + \theta \int_0^1 \langle  \Ad_{\exp(-s\theta\widehat{J}) }\widehat{J},\widehat{J}' \rangle ds\\
&=&\theta' \langle \hat{J}, \hat{J} \rangle + \theta \langle\widehat{J},\widehat{J}'\rangle.
\eeaa
Since \(\|\widehat J\|=1\) is constant,
\(\langle\widehat J,\widehat J'\rangle=\tfrac12\partial_\lambda\langle \widehat J, \widehat J\rangle=0\), we get
\begin{equation}\label{eq:tag1}
    \theta'(\lambda)=\big\langle \widehat J(\lambda),\,\partial_\lambda M_\lambda\,M_\lambda^{-1}\big\rangle.
\end{equation}

\emph{Step 2.}
Denote\(B(t,\lambda)=-\lambda A(t)\). Differentiate $U(t,s)U(s,t)=I$ in $t$, one has
\[\partial_t U_{\lambda}^{-1}(t,s)=\partial_t U_{\lambda}(s,t)=-U_{\lambda}(s,t)B(t),\]
the left trivialized derivative \(X_{\lambda}(t,s):=\partial_\lambda U_\lambda(t,s)\,U_\lambda(t,s)^{-1}\)
solves \[\partial_t X_{\lambda}=\partial_\lambda B+[B,X], \quad X(s,s)=0.\]
Conjugating $X_{\lambda}$ by \(U_\lambda\), taking derivative  with respect to  $t$ and then integrating from $s$ to $t$, yields
\[
\partial_\lambda U_\lambda(t,s)\,U_\lambda(t,s)^{-1}
=-\int_s^{t}\!\Ad_{\,U_\lambda(t,\tau)}A(\tau)\,d\tau.
\]
For \(t=2\pi,s=0\),
\begin{equation}\label{eq:tag2}
    \partial_\lambda M_\lambda\,M_\lambda^{-1}
=-\int_{0}^{2\pi}\!\Ad_{\,U_\lambda(2\pi,s)}A(s)\,ds.
\end{equation}
Plug \x{eq:tag2} into \x{eq:tag1} and use Ad-invariance of \(\langle\cdot,\cdot\rangle\):
\begin{equation}\label{eq:tag3}
    \theta'(\lambda)=-\int_0^{2\pi}\!\Big\langle
\Ad_{\,U_\lambda(2\pi,s)^{-1}}\widehat J(\lambda),\,A(s)\Big\rangle ds.
\end{equation}

\emph{Step 3.}
Define \(C(s):=\xi(s)\,S+\eta(s)\,T\), so that \(C'(s)=A(s)\) and

\[[A(s),C(s)]=(\xi(s)\eta'(s)-\xi'(s)\eta(s))\,J_0=a(s)J_0.\] 
Set \[Y(s):=\Ad_{\,U_\lambda(2\pi,s)^{-1}}\widehat J(\lambda).\]
Differentiate \(Y(s)\):
\[
Y'(s)=-\lambda\,[A(s),Y(s)]\qquad(\text{from }\partial_s U_\lambda(2\pi,s)=\lambda U_\lambda(2\pi,s)A(s)).
\]
Integrate \x{eq:tag3} by parts on \([0,2\pi]\):
\[
\theta'(\lambda)
=-\big[\langle Y,C\rangle\big]_{0}^{2\pi}
+\int_0^{2\pi}\!\langle Y'(s),C(s)\rangle\,ds.
\]
Since \(M_\lambda=\exp(\theta\widehat J)\) commutes with $\widehat J$, we have  \(Y(0)=Y(2\pi)=\widehat J\),
and the boundary term vanishes, while
\[
\langle Y',C\rangle=-\lambda\,\langle [A,Y],C\rangle
=\lambda\,\langle Y,[A,C]\rangle
=\lambda a\,\langle Y,J_0\rangle.
\]
Thus
\begin{equation}\label{eq:theta}
    \theta'(\lambda)=\lambda \int_0^{2\pi}\!a(s)\langle Y(s),J_0\rangle\,ds.
\end{equation}

\emph{Step 4.}
As $J_0,S,T$ spans $\mathfrak{su}(1,1)$, write 
\[Y(s)=y(s)J_0+y_S(s)S+y_T(s) T,\]
Since Ad is an isometry for $\langle \cdot,\cdot\rangle$, 
\[\langle Y(s),Y(s)\rangle= \langle \widehat J(\lambda),\widehat J(\lambda)\rangle=1, \]
By (\ref{eq:gener}), it implies
\[y^2(s)-y_S^2(s)-y_T^2(s)=4 \implies |y(s)|\geq 2. \]

\emph{Step 5.} On the other hand, denote $\tilde{U}=\tilde{U}_{\lambda}(s):=G_{\lambda} U_{\lambda}(2\pi,s)$, then
 \[Y(s)=i \tilde{U}^{-1}J\tilde{U},\]
Denote $e_1=(1,0)^T$, a direct computation using the form (\ref{eq:xform}) for $Y(s)$ shows
\[y(s)=4 \langle Y(s), J_0\rangle= 2\,\mathrm{Im}\langle e_1, Y(s)e_1\rangle_J.\]
Let $v=\tilde{U} e_1$, then 
\[\langle e_1, Y(s)e_1\rangle_J=\langle \tilde{U} e_1, \tilde{U} Y(s)e_1\rangle_J=i \langle v, Jv\rangle_J=i\|v\|^2.\]
therefore $y(s)=2\|v\|^2>0$, whence $y(s)\geq 2$.

By (\ref{eq:theta}), we have
\[\theta'(\lambda)=\lambda \int_0^{2\pi} a(s)y(s)\langle J_0,J_0\rangle ds\geq \lambda \mathcal{A}(\Gamma),\]
in any connected component of the elliptic set.
\end{proof}
\qed

\begin{rmk} The condition $\omega(\Gamma)=1$ is used to specify  positive orientation of the curve, under which condition the algebraic area $\A(\Ga)>0$. If one choose $\omega(\Gamma)=-1$, then the areal velocity $a(t)<0$ and $\A(\Ga)<0$. The conclusion  in Proposition \ref{main1} becomes
\[\theta'(\lambda)\leq \lambda \mathcal{A}(\Gamma),\quad \forall \lambda\in I_E.\]

\end{rmk}


\bigskip

\begin{thm} \label{main2}Suppose $\Gamma$ is a $C^2$ star-shaped closed curve with $\omega(\Gamma)=1$. Then
 there is only one connected component of the elliptic set, where
\be\label{eq:rate2}
    \rho'(R)\leq -\frac{\mathcal{A}(\Gamma)}{\pi R^3},\quad R>\underline{R}(\Gamma).
\ee
In particular, the critical B-length and turning B-length of $\Gamma$ coincide:
\[\underline{R}(\Gamma)=\overline{R}(\Gamma).\]

\end{thm}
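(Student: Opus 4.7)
The plan is to recast Proposition~\ref{main1} as a global statement via the trace function $\mu(R):=\tr(M_R)/2$ and show that the elliptic set consists of a single connected component, which must equal $(\underline{R}(\Gamma),+\infty)$.

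First I would observe that $\mu$ is real analytic on $(0,\infty)$ because $M_R$ depends analytically on $R$ (cf.\ the Picard iteration used in Theorem~\ref{anal}). On any connected component $I_E$ of the elliptic set one has $|\mu|<1$ and $\mu(R)=\cos\theta(\lambda)$ with $\theta(\lambda)\in(0,\pi)$, so
\[
\mu'(R)=\sin\theta(\lambda)\,\theta'(\lambda)/R^{2}.
\]
Since $\Gamma$ is positively oriented and star-shaped, the areal velocity $a_\Gamma(t)>0$, giving $\A(\Gamma)>0$; hence $\theta'(\lambda)\ge \lambda\,\A(\Gamma)>0$ by Proposition~\ref{main1}, and $\mu$ is strictly increasing on $I_E$. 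Combined with the boundary condition $|\mu|=1$ at finite endpoints, strict monotonicity forces $\mu$ to run from $-1$ at the left endpoint to $+1$ at the right endpoint (or to the limit $+1$ at $+\infty$).

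Next I would rule out more than one elliptic component by an intermediate-value argument: if $(a_1,b_1)$ and $(a_2,b_2)$ were distinct components with $b_1<a_2$, then $\mu(b_1)=+1$ and $\mu(a_2)=-1$ while $|\mu|\ge 1$ throughout the gap $[b_1,a_2]$, contradicting IVT applied to the continuous $\mu$. Since Proposition~\ref{asym} guarantees $\rho(R)\in(0,1)$ for $R$ large, the elliptic set is non-empty and unbounded, hence of the form $(R^{*},+\infty)$ for some $R^{*}\ge 0$. To identify $R^{*}=\underline{R}(\Gamma)$, note that $(0,R^{*}]$ is a connected interval on which $\rho$ is continuous and integer-valued, hence constant; continuity at $R^{*}$ from the elliptic side yields $\rho(R^{*})=\arccos(\mu(R^{*}))/\pi=\arccos(-1)/\pi=1$, so $\rho\equiv 1$ on $(0,R^{*}]$ and therefore $R^{*}\le\underline{R}$. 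The reverse inequality is immediate from $\rho(R)=1\in\Z$ for all $R\le \underline{R}$.

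Finally, on $(\underline{R},+\infty)$ the branch of Proposition~\ref{mono-02} pinned by $\rho(R)\to 0^{+}$ as $R\to\infty$ is $\rho(R)=\theta(\lambda)/\pi$, and Proposition~\ref{main1} gives
\[
\rho'(R)=-\theta'(\lambda)/(\pi R^{2})\le -\A(\Gamma)/(\pi R^{3}),
\]
which is exactly \eqref{eq:rate2}. This shows $\rho$ is strictly decreasing on $(\underline{R},+\infty)$, so the maximal interval of strict monotonicity on which $\rho\ne\omega=1$ is precisely $(\underline{R},+\infty)$, yielding $\underline{R}(\Gamma)=\overline{R}(\Gamma)$. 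The main obstacle is the passage from the per-component monotonicity of Proposition~\ref{main1} to the global structural statement on $(\underline{R},+\infty)$; the bridge is the global analyticity of $\mu$ combined with IVT, which rules out any gap between elliptic components, and one must also verify carefully that $\rho$ is constantly $1$ on the entire complement $(0,R^{*}]$, relying on the continuity of $\rho$ and its integer-valuedness on a connected interval.
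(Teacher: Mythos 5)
Your overall strategy (trace function $\mu(R)=\tfrac12\tr(M_R)$ plus an intermediate-value argument across a putative gap) is genuinely different from the paper's, but it has a real gap at its central step: the assertion that on \emph{every} connected component of the elliptic set one may take $\theta(\lambda)\in(0,\pi)$, so that $\mu'(R)=\sin\theta\,\theta'(\lambda)/R^{2}>0$ and $\mu$ runs from $-1$ at the left endpoint to $+1$ at the right endpoint. Proposition \ref{main1} normalizes $\langle\widehat J,\widehat J\rangle=1$ and gives $\theta'(\lambda)>0$, but it does not locate $\theta$ modulo $2\pi$: the intrinsic angle of an elliptic element of $SU(1,1)$ can lie in $(\pi,2\pi)$ just as well as in $(0,\pi)$ (the two cases differ by the sign $M\mapsto -M$, which the M\"obius action and hence the rotation number mod $1$ cannot detect, while the trace does). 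On a hypothetical second elliptic component — where $\rho$ would run through $(-1,0)$ — you cannot a priori exclude $\sin\theta<0$; in that case $\mu$ \emph{decreases} from $+1$ to $-1$ across it, so $\mu(b_1)=\mu(a_2)=+1$ and your IVT step produces no contradiction. In other words, the claim "$\mu$ increases from $-2$-trace to $+2$-trace on each component'' is essentially equivalent to the identity $\tr(M_R)=2\cos(\pi\rho(R))$ on every component, which is part of what has to be proved, not an input; it cannot be propagated through the gaps by continuity since it fails on hyperbolic intervals.

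The paper sidesteps this branch issue by arguing with $\rho$ itself: Proposition \ref{main1} gives $\rho'(R)\le-\mathcal{A}(\Gamma)/(\pi R^{3})<0$ on every elliptic component, $\rho$ is a constant integer on each complementary interval, hence $\rho$ is non-increasing on $(\underline{R}(\Gamma),\infty)$; if a second component existed, $\rho$ would first reach $0$ and then become and stay non-positive, contradicting $\rho(R)=\mathcal{A}(\Gamma)/(2\pi R^{2})+O(R^{-3})>0$ for large $R$ (Proposition \ref{asym}). Your final paragraph (identification of $R^{*}$ with $\underline{R}(\Gamma)$, the pinning of the branch by $\rho\to0^{+}$, and the rate \eqref{eq:rate2}) is fine once uniqueness of the component is established, and agrees with the paper; to repair your proof, replace the trace/IVT step by this monotonicity-of-$\rho$ argument, or else supply an independent proof that the intrinsic angle stays in $(0,\pi)$ on every component.
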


\begin{proof} By Proposition \ref{main1}, on any connected component of the elliptic set of $\Gamma$, we have
    \[\rho'(R)=-\frac{\theta'(1/R)}{\pi R^2}\leq -\frac{\mathcal{A}(\Gamma)}{\pi R^3}.\]
    By definition, $R$ enters to an elliptic component right after $\mcr(\Gamma)$. Meanwhile, by Proposition \ref{asym}, $\rho(R)$ decreases to $0$ as $R$ goes to infinity, we conclude that $\rho$ could have only one elliptic component given by $(\overline{R}(\Gamma),\infty)$. Otherwise, there will be an $R>\mcr(\Gamma)$ such that $\rho(R)=0$, after that $\rho(R)$ has to strictly decrease and can not be positive again.

\end{proof}
\qed

Combine Theorem \ref{main2} and Proposition \ref{convex}, we get immediately 
\begin{cor} Suppose $\Gamma$ is a $C^2$ strictly convex simple closed curve. Then the corresponding monodromy map $\mathcal{M}_R$ is hyperbolic when $R<\underline{R}(\Gamma)$ and elliptic when $R>\underline{R}(\Gamma)$.

\end{cor}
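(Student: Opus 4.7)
The plan is to verify the hypotheses of Theorem \ref{main2} for a strictly convex simple closed curve, then combine its conclusion with Proposition \ref{convex} to pin down the type of $\mathcal{M}_R$ on both sides of $\underline{R}(\Gamma)$. First, a $C^2$ strictly convex simple closed curve $\Gamma$ is automatically star-shaped: any point in its bounded convex interior serves as a star-center. By reversing the parametrization if necessary we may also assume $\omega(\Gamma)=1$, so $\mathcal{A}(\Gamma)>0$ and Theorem \ref{main2} applies.

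For the hyperbolic side, I would invoke Proposition \ref{convex}(1) directly: for every $R\in(0,\underline{R}(\Gamma))$ the Poincar\'e map $\widetilde{\mathcal{P}}_R$ admits two fixed points, so $\mathcal{M}_R$ has two fixed points on $S^1$ and hence is hyperbolic. This step is immediate and requires no further work.

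For the elliptic side, the plan is to use that Theorem \ref{main2} gives $\underline{R}(\Gamma)=\overline{R}(\Gamma)$ together with strict monotonicity $\rho'(R)<0$ on $(\underline{R}(\Gamma),\infty)$. Combined with $\rho(\underline{R}(\Gamma))=\omega=1$ by continuity and $\rho(R)\to 0$ as $R\to\infty$ from Proposition \ref{asym} (or Lemma \ref{upbound}), we obtain $\rho(R)\in(0,1)$, hence $\rho(R)\notin\mathbb{Z}$, for every $R>\underline{R}(\Gamma)$. By the characterization of monodromy type in terms of the integrality of $\rho$ recalled earlier, this forces $\mathcal{M}_R$ to be elliptic on the whole half-line $(\underline{R}(\Gamma),\infty)$, which is precisely the (unique) elliptic component provided by Theorem \ref{main2}.

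There is essentially no obstacle beyond these two invocations; the only point requiring a moment's care is the confirmation that $\rho(R)$ stays strictly between $0$ and $1$ for all $R>\underline{R}(\Gamma)$, rather than only on an initial interval. This, however, is forced by strict monotonicity plus the limit $\rho(R)\to 0^+$, since a strictly decreasing positive function with limit $0$ at $\infty$ starting from $1$ cannot hit an integer value in between.
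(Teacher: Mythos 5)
Your proposal is correct and takes essentially the same route as the paper, which likewise deduces the corollary by combining Proposition \ref{convex} (two fixed points, hence hyperbolicity, for $R<\underline{R}(\Gamma)$) with Theorem \ref{main2} (uniqueness of the elliptic component and strict decrease of $\rho$ beyond $\underline{R}(\Gamma)$). Your additional remarks -- star-shapedness of a convex curve, the orientation normalization $\omega(\Gamma)=1$, and the pinching $\rho(\underline{R}(\Gamma))=1$, $\rho(R)\to 0$ giving $\rho(R)\in(0,1)$ -- merely spell out the ``immediately'' in the paper's one-line proof.
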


In order to show that the above property is also true for general star-shaped curves. We need that there is no parabolic point inside the plateau of the rotation number function $\rho$.

\begin{prop} \label{nopara} Suppose $\Gamma$ is a $C^2$ star-shaped closed curve with $\omega(\Gamma)=1$. Then at the first plateau $(0,\mcr(\Gamma))$, all points are hyperbolic, i.e. there is no point $R\in (0,\mcr(\Gamma))$ such that $|\mathrm{tr}(M_R)|=2$.
    
\end{prop}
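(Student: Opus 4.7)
We argue by contradiction: at a hypothetical interior parabolic point $\lambda_0:=1/R_0$ with $R_0\in(0,\mcr(\Gamma))$, we show that the first variation of $\tau(\lambda):=\tr(M_\lambda)$ must simultaneously vanish (by a local-extremum argument) and be nonzero (by an explicit computation following the framework of Proposition~\ref{main1}). Throughout the plateau $\rho(R)\equiv\omega=1\in\Z$, so $|\tau(\lambda)|\geq 2$; since $\tau$ is real analytic in $\lambda$ (the matrix $M_\lambda$ is analytic by the Picard iteration in the proof of Theorem~\ref{anal}) and the plateau is connected, $\tau$ stays entirely in $[2,\infty)$ or entirely in $(-\infty,-2]$. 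We may treat the positive case (the negative one, which is actually the relevant one for star-shaped $\Gamma$ with $\omega=1$, is handled identically after writing $M_{\lambda_0}=-I+\tilde N$). Then $\lambda_0$ is an interior local minimum of $\tau$, forcing $\tau'(\lambda_0)=0$.

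Next I compute $\tau'(\lambda_0)$ by adapting Steps~2--4 of Proposition~\ref{main1}. Since $\tau(\lambda_0)=2$, we write $M_{\lambda_0}=I+N$ with $N\in\mathfrak{su}(1,1)$ nonzero and null nilpotent ($N^2=0$ and $\langle N,N\rangle=0$). Tracelessness of $\mathfrak{su}(1,1)$ together with $\partial_\lambda M_\lambda=X_\lambda M_\lambda$ gives
\[
\tau'(\lambda_0)\;=\;\tr(X_{\lambda_0}M_{\lambda_0})\;=\;\tr(NX_{\lambda_0}),\qquad X_{\lambda_0}:=\partial_\lambda M_\lambda\big|_{\lambda_0}\,M_{\lambda_0}^{-1}.
\]
Substituting $X_{\lambda_0}=-\int_0^{2\pi}\Ad_{U_{\lambda_0}(2\pi,s)}A(s)\,ds$ from Step~2, converting to the Ad-invariant form $\langle\cdot,\cdot\rangle$, and integrating by parts with $A=C'$ and $[A,C]=a\,J_0$ yields
\[
\tau'(\lambda_0)\;=\;-\frac{\lambda_0}{2}\int_0^{2\pi}a(s)\,y(s)\,ds,\qquad y(s):=4\langle Y(s),J_0\rangle,\quad Y(s):=\Ad_{U_{\lambda_0}(2\pi,s)^{-1}}N.
\]
The boundary terms vanish because $M_{\lambda_0}=I+N$ commutes with $N$ (hence $Y(0)=Y(2\pi)=N$) and $C=\xi S+\eta T$ is $2\pi$-periodic.

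The principal obstacle, replacing Step~5 of Proposition~\ref{main1}, is to show $\int_0^{2\pi} a\,y\,ds\neq 0$; here the null structure of $N$ replaces the timelike lower bound $y\geq 2$ used in the elliptic case. Writing $Y=y J_0+y_S S+y_T T$, the Ad-invariance of $\langle\cdot,\cdot\rangle$ combined with $\langle N,N\rangle=0$ gives $y(s)^2=y_S(s)^2+y_T(s)^2$; together with $Y(s)\neq 0$ for every $s$ (since $N\neq 0$ and Ad is injective), this forces $y(s)\neq 0$, and continuity of $Y$ through the linear ODE $Y'=-\lambda_0[A,Y]$ implies $y$ has constant sign on $[0,2\pi]$. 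Star-shapedness is what upgrades this to a definite integral inequality: the areal velocity $a(s)=\det(\Gamma(s),\Gamma'(s))>0$ for all $s$, so $a\,y$ has constant nonzero sign and $\int_0^{2\pi}a(s)\,y(s)\,ds\neq 0$. Hence $\tau'(\lambda_0)\neq 0$, contradicting the local-extremum equation, and no parabolic point can lie strictly inside the plateau.
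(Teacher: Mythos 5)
Your approach is genuinely different from the paper's. The paper argues by perturbing the curve itself: it splices a small radial bump $\phi_\varepsilon$ onto $\Gamma$, writes $M_{R,\varepsilon}=N_{\varepsilon,R}M_{R,0}$ where $N_{\varepsilon,R}$ is the monodromy of the tiny spliced loop, uses the Magnus expansion (Lemma \ref{magnus-area}) to see that $N_{\varepsilon,R}$ is a small elliptic rotation whose sense is controlled by the sign of $\varepsilon$, and then shows that a parabolic point inside the plateau could be pushed to elliptic while a hyperbolic point at larger $R$ stays hyperbolic, so the perturbed (still star-shaped) curve would have two elliptic components, contradicting Theorem \ref{main2}. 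You instead work intrinsically with the trace $\tau(\lambda)=\tr(M_\lambda)$ at a hypothetical parabolic point: the interior-extremum argument forces $\tau'(\lambda_0)=0$, while the first-variation formula of Proposition \ref{main1} (Steps 2--4), applied with the null element $N=M_{\lambda_0}\mp I$ in place of $\widehat J$, gives $\tau'(\lambda_0)=-\tfrac{\lambda_0}{2}\int_0^{2\pi}a\,y\,ds\neq0$ because $\langle N,N\rangle=0$ and $N\neq0$ force $y$ to be nowhere zero, hence of constant sign, and star-shapedness gives $a>0$. I checked the computation ($\tau'=\tr(NX_{\lambda_0})$ by tracelessness of $X_{\lambda_0}$, the vanishing of the boundary terms since $M_{\lambda_0}$ commutes with $N$ and $C$ is periodic, and the pointwise nonvanishing of $y$), and it is correct; as a bonus it avoids the perturbation and the appeal to persistence of star-shapedness and hyperbolicity under the bump.

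However, there is a genuine gap: you assert that $N$ is \emph{nonzero}, i.e.\ you silently exclude the case $M_{R_0}=\pm I$. The statement rules out all $R$ with $|\tr(M_R)|=2$, and $M_R=\pm I$ (identity M\"obius monodromy) is such a point; the paper's proof treats $M_{R_1,0}=-I$ explicitly as a separate case. When $N=0$ your argument collapses: $Y\equiv0$, $\tau'(\lambda_0)=0$ holds automatically, and no contradiction is produced. Nor is the fix immediate at second order: writing $M_\lambda=\exp(Z(\lambda))M_{\lambda_0}$ with $Z(\lambda_0)=0$, the plateau condition only forces $\langle X_{\lambda_0},X_{\lambda_0}\rangle\le0$ for $X_{\lambda_0}=Z'(\lambda_0)$, and the same integration-by-parts (with boundary terms vanishing because $\pm I$ is central) yields only $\langle X_{\lambda_0},J_0\rangle\ge\tfrac{\lambda_0}{2}\A(\Gamma)>0$, which shows $X_{\lambda_0}\neq0$ but does not make it timelike, so no contradiction follows without an additional idea (e.g.\ the paper's splicing perturbation, or a genuinely higher-order analysis). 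You should either supply an argument excluding $M_R=\pm I$ on $(0,\mcr(\Gamma))$ or handle that case by a separate mechanism; as written the proof covers only the non-identity parabolic points.
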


\begin{proof} We fix a star-shaped closed curve with parametrization $\Gamma(t)=r(t)(\cos t,\sin t)$. Without loss of generality, suppose $\min_{t\in [2\pi-\varepsilon_0,2\pi]}|r(t)|=\delta>0$ for some $\varepsilon_0<\delta/2$. 

\emph{Step 1.}
Fix a smooth nontrivial positive $2\pi$-periodic function $\phi$ supported in $(-1,0)$, having smooth contact with value $0$ at the boundary, and $\|\phi\|_{\infty}\leq 1$. Consider $\epsilon$ small enough such that $|\varepsilon|<\varepsilon_0$. Set $\phi_{\varepsilon}=\varepsilon\varphi(t/|\varepsilon|)$
and define the perturbed curve
\[
\Gamma_\varepsilon(t):=\big(r(t)+\phi_\varepsilon(t)\big)(\cos t,\sin t).
\]
Then $\Gamma_{\varepsilon}$ is star-shaped. For $\varepsilon>0$ this adds a tiny bump outward at the arc $t\in (2\pi-|\varepsilon|,2\pi)$; for $\varepsilon<0$ it indents inward.

Denote by $M_{R,\varepsilon}(t_2,t_1)$ the monodromy from $t_1$ to $t_2$ along $\Gamma_\varepsilon$,
and  $M_{R,\varepsilon}= M_{R,\varepsilon}(2\pi,0)$.
A direct concatenation shows
\begin{equation}\label{eq:conta}
    M_{R,\varepsilon}
=M_{R,\varepsilon}(2\pi,2\pi-|\varepsilon|)M_{R,0}(2\pi-|\varepsilon|,0)
= N_{\varepsilon,R} \,M_{R,0},
\end{equation}
where $N_{\varepsilon,R}:=M_{R,\varepsilon}(2\pi,2\pi-|\varepsilon|)\,M_{R,0}(2\pi-|\varepsilon|,2\pi)$
is the monodromy of the “short spliced” closed curve $C_\varepsilon$ that first runs
$\Gamma$ backwards and then runs $\Gamma_\varepsilon$ forward
on $[2\pi-|\varepsilon|,2\pi]$.

\medskip
\emph{Step 2.}
Let $N_{\varepsilon,R}(t)$ satisfy $N'(t)=-(2R)^{-1}B_{\varepsilon}(t)\,N(t)$ with $N(0)=I$ along $C_\varepsilon$. 
Denote $N_{\varepsilon,R}=N_{\varepsilon,R}(2|\varepsilon|)$. Since the tangent vector of the curve $C_{\epsilon}$ has bounded norm, for $\varepsilon$ sufficiently small and $R$ fixed, by Lemma \ref{magnus-area},
\[N_{\varepsilon}=N_{\varepsilon,R}=\exp\!\big(h_R(\varepsilon)J_0+O(|\varepsilon|^3/R^3)).\]
where $h_{R}(\varepsilon)= \frac {1}{R^2}\mathcal{A}(C_{\varepsilon})$ is proportional to the signed area enclosed by $C_\varepsilon$. Hence $h(\varepsilon)$ has opposite signs for $\varepsilon>0$ and $\varepsilon<0$, and
$|h_R(\varepsilon)|\asymp \varepsilon^2$.


Using the expansion $\mathrm{tr}( e^X )=2+1/2\,\mathrm{tr}(X^2)+O(\|X\|^4)$, we get that $N_{\varepsilon,R}$ is elliptic for $\varepsilon\neq 0$ small.
Moreover, there exists $P_\varepsilon: = P_{\varepsilon,R}\in{\rm SU}(1,1)$ with $\|P_\varepsilon-I\|\to 0$ and
a continuous angle function $\theta(\varepsilon):=\theta_R(\varepsilon)=h_R(\varepsilon)+O(\varepsilon^3)$ such that
\begin{equation}\label{eq:Neps-rot}
N_\varepsilon=P_\varepsilon^{-1}\begin{pmatrix}\cos\theta(\varepsilon)+i\sin\theta(\varepsilon)&0\\
0&\cos\theta(\varepsilon)-i\sin\theta(\varepsilon)\end{pmatrix}P_\varepsilon.
\end{equation}

\medskip
\emph{Step 3.}
Assume, for contradiction, that there exists $R_1\in(0,R(\Gamma))$ with $|\text{tr}(M_{R_1,0})| = 2$ and some $R_2\in(R_1,R(\Gamma))$ with $M_{R_2,0}$ hyperbolic.
 Let us suppose $\tr(M_{R_1,0})=-2$ (the case $\tr(M_{R_1,0})=2$ is similar). If $M_{R_1,0}=-I$. Then
$M_{R_1,\varepsilon}=-\,N_{\varepsilon,R_1}$ is elliptic for all small $\varepsilon\ne 0$.  If $M_{R_1,0}$ is  parabolic, write
\[
P_{\varepsilon}M_{R_1,0}\,P_{\varepsilon}^{-1}=
\begin{pmatrix}-1+ic(\varepsilon)&*\\ *&-1-ic(\varepsilon)\end{pmatrix},\qquad c(0)\ne 0.
\]
Using \eqref{eq:conta} and \eqref{eq:Neps-rot} and taking traces,
\[
\mathrm{tr}(\,M_{R_1,\varepsilon})=\mathrm{tr}\big(\mathrm{diag}\big(e^{i\theta(\varepsilon)},e^{-i\theta(\varepsilon)}\big)P_\varepsilon M_{R_1,0}P_\varepsilon^{-1}\big)
=-2\cos\theta(\varepsilon)-2c(\varepsilon)\sin\theta(\varepsilon).
\]
Since $\theta(\varepsilon)=h(\varepsilon)+O(\varepsilon^3)$ and $c(\varepsilon)$ has fixed sign for
$|\varepsilon|$ small, we can pick the sign of $\varepsilon$ so that
$\sin\theta(\varepsilon)$ and $c(\varepsilon)$ have the opposite signs. Then
$\mathrm{tr}\,M_{R_1,\varepsilon}>-2$, i.e. $M_{R_1,\varepsilon}$ is elliptic.

Meanwhile, using $\tr(M_{R_2,\varepsilon})=\tr(N_{\varepsilon,R_2} M_{R_2,0})$, where $\|N_{\varepsilon,R_2} - I\|\to 0$, we can take $\varepsilon$ small enough such that $|\tr(M_{R_2,\varepsilon})|>2$. Thus $\Gamma_{\varepsilon}$ has two connected components of elliptic set, contradicts with Theorem \ref{main2}.
\end{proof}
\qed
\bigskip

By Theorem \ref{main2} and Proposition \ref{nopara}, we have

\begin{thm}\label{trans} Suppose $\Gamma$ is a $C^2$ star-shaped closed curve. Then the corresponding monodromy map $\mathcal{M}_R$ is hyperbolic when $R<\underline{R}(\Gamma)$ and is elliptic when $R>\underline{R}(\Gamma)$.
    
\end{thm}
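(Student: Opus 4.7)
The plan is to deduce this theorem as a direct corollary of Theorem \ref{main2} and Proposition \ref{nopara}, which together handle the two regimes $R>\underline R(\Gamma)$ and $R<\underline R(\Gamma)$ respectively. No further dynamical input should be required.

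For $R>\underline R(\Gamma)$, I would invoke Theorem \ref{main2}, which identifies the elliptic set of $\Gamma$ as the single semi-infinite interval $\bigl(\overline R(\Gamma),\infty\bigr)=\bigl(\underline R(\Gamma),\infty\bigr)$, using the coincidence $\underline R(\Gamma)=\overline R(\Gamma)$ for star-shaped curves. By the definition of the elliptic set this gives immediately that $\mathcal{M}_R$ is elliptic. For $R\in(0,\underline R(\Gamma))$, the definition of $\underline R(\Gamma)$ forces $\rho_\Gamma(R)\equiv\omega(\Gamma)\in\mathbb{Z}$. The earlier classification lemma equates $\rho(R)\in\mathbb{Z}$ with the statement that $\mathcal{M}_R$ is hyperbolic or parabolic. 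Since Proposition \ref{nopara} excludes parabolic points throughout the first plateau, $\mathcal{M}_R$ must be hyperbolic.

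The substantive work has already been done upstream: Theorem \ref{main2} provides the strict monotonicity of $\rho$ on the elliptic set using the $SU(1,1)$ computation of $\theta'(\lambda)$ together with the positivity of the areal velocity, while Proposition \ref{nopara} rules out parabolicity on the plateau via a splice-perturbation by a small outward or inward bump combined with the Magnus expansion of the local monodromy. Granting these two inputs, the assembly into Theorem \ref{trans} is a one-line case distinction on the value of $\rho_\Gamma(R)$, and I do not anticipate any additional technical obstacle at this final step.
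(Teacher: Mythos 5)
Your proposal matches the paper's own argument: Theorem \ref{trans} is obtained there exactly by combining Theorem \ref{main2} (the elliptic set is the single component $(\underline{R}(\Gamma),\infty)$) with Proposition \ref{nopara} (no parabolic points on the first plateau, where $\rho\equiv\omega\in\Z$ forces hyperbolic or parabolic). The assembly you describe is the same one-line case distinction the paper makes, so the proposal is correct and essentially identical in approach.
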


 \appendix
 \appendixpage           
\addappheadtotoc

\section{Magnus series}\label{sec:magnus}
The Magnus expansion provides a canonical way to write the fundamental solution of a linear matrix ODE
\[U'(t)=A(t)U(t), \qquad U(0)=I\]
 as a single exponential \(U(t)=\exp\big(\Omega(t)\big)\).
Here \(\Omega(t)=\sum_{k\ge1}\Omega_k(t)\) is a Lie series built from time–ordered integrals of nested commutators of \(A\). See \cite{M54, BCOR09} for details. 

In the following , the notation $O(\cdot)$ stands for a matrix whose norm is  bounded by a constant of the argument as the argument goes to $0$. 

Set
\[
g(t):=\int_0^t \|A(s)\|\,ds.
\]

\begin{thm}[Two-term Magnus exponent and $O(g^3)$ remainder]
\label{thm:magnus-2}
Fix $t\in[0,L]$ and assume $g(t)$ is sufficiently small. Then the logarithm $\Omega(t):=\log U(t)$ is well-defined and admits the expansion
\[
\Omega(t)\;=\;\Omega_1(t)+\Omega_2(t)+O(g(t)^3),
\]
where
\[
\Omega_1(t)=\int_0^t A(s)\,ds,\qquad
\Omega_2(t)=\frac12\!\!\int_{0<s_2<s_1<t}\!\![A(s_1),A(s_2)]\,ds_2ds_1.
\]
\end{thm}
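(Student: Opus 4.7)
The plan is to combine the Picard/Dyson (time-ordered) expansion of the fundamental solution with a power-series expansion of the matrix logarithm, and then identify the first two Magnus terms by a symmetrization of the resulting double integral. I would not invoke the full Magnus differential equation or the $\operatorname{dexp}^{-1}$ machinery, since everything needed can be read off from order-two bookkeeping of two absolutely convergent series.

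First, write
\[
U(t)=I+\sum_{k\geq 1}P_k(t),\qquad P_k(t):=\int_{0<s_k<\cdots<s_1<t}A(s_1)\cdots A(s_k)\,ds_k\cdots ds_1,
\]
and use the standard estimate $\|P_k(t)\|\leq g(t)^k/k!$, whence $\|U(t)-I\|\leq e^{g(t)}-1$. Choosing $g(t)$ small enough (say $g(t)<\ln 2$) makes $\|U(t)-I\|<1$, so $\Omega(t)=\log U(t)$ is well-defined by the absolutely convergent series $\log(I+X)=\sum_{n\geq 1}(-1)^{n+1}X^n/n$ applied to $X=U(t)-I$. Truncating both series at the appropriate order gives $X=P_1+P_2+O(g^3)$, $X^2=P_1^2+O(g^3)$ (since the cross terms $P_1P_2$ and $P_2P_1$ are already $O(g^3)$), and $X^n=O(g^n)=O(g^3)$ for $n\geq 3$, so
\[
\Omega(t)=P_1(t)+P_2(t)-\tfrac12\,P_1(t)^2+R(t),\qquad \|R(t)\|=O(g(t)^3).
\]

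The key algebraic step is then to rewrite $P_1(t)^2=\bigl(\int_0^t A(s)\,ds\bigr)^2$ as the double integral over the square $[0,t]^2$ and split it into the two simplices $\{s_2<s_1\}$ and $\{s_1<s_2\}$; renaming variables in the second piece yields
\[
P_1(t)^2=\int_{0<s_2<s_1<t}\bigl(A(s_1)A(s_2)+A(s_2)A(s_1)\bigr)\,ds_2\,ds_1,
\]
so that
\[
P_2(t)-\tfrac12\,P_1(t)^2=\tfrac12\!\!\int_{0<s_2<s_1<t}\!\![A(s_1),A(s_2)]\,ds_2\,ds_1=\Omega_2(t),
\]
which is exactly the claimed second Magnus term. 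Combined with $P_1(t)=\Omega_1(t)$ and the $O(g^3)$ bound on $R(t)$, this gives the stated expansion.

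The only obstacle is the bookkeeping of the remainder: one must check that the cubic and higher Picard terms, the cross terms $P_1P_2,P_2P_1,P_2^2$ appearing in $X^2$, and the tail $\sum_{n\geq 3}X^n/n$ from the logarithm series are all uniformly $O(g(t)^3)$ with constants that depend only on $g(t)$ (via submultiplicativity of the operator norm). This is routine rather than delicate, and no analytic input beyond Gr\"onwall-type bounds on the Dyson series and the convergence radius of $\log(I+X)$ is required.
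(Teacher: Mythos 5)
Your proposal is correct and follows essentially the same route as the paper's proof: Picard/Dyson expansion with the bound $\|P_k(t)\|\le g(t)^k/k!$, truncation of $\log(I+X)$ at second order, and the simplex-symmetrization identity turning $P_2-\tfrac12 P_1^2$ into the ordered commutator integral. The remainder bookkeeping you flag is exactly the routine estimate the paper carries out, so there is no gap.
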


\begin{proof}
By Picard iteration,
\[
U(t)=I+\sum_{k\geq 1}U_k(t):=I+\sum_{k\ge1} \int_{0<s_k<\cdots<s_1<t} A(s_1)\cdots A(s_k)\,ds_k\cdots ds_1.
\]
Submultiplicativity and the volume of the ordered simplex give
\[
\Big\|\int_{0<s_k<\cdots<s_1<t} \prod_{j=1}^k A(s_j)\,ds\Big\|\le g(t)^k/k!
\]
Hence
\[
U(t)=I+U_1(t)+U_2(t)+O\!\big(g(t)^3\big).
\]

Write $U(t)=I+X(t)$ with $X(t)=U_1+U_2+O(g^3)$ and $\|X(t)\|\le e^{g(t)}-1=O(g(t))$.
For $\|X\|$ small,
\[
\log(I+X)=X-\tfrac12 X^2+O\!\big(\|X\|^3\big)
=X-\tfrac12 X^2+O\!\big(g(t)^3\big).
\]
Insert $X=U_1+U_2+O(g^3)$; since $U_2=O(g^2)$, the cross terms $U_1U_2$ and $U_2U_1$ are $O(g^3)$, and $U_2^2=O(g^4)$. Thus
\[
\Omega(t)=\log U(t)
= \underbrace{U_1(t)}_{\Omega_1(t)}
+\underbrace{\big(U_2(t)-\tfrac12 U_1(t)^2\big)}_{\Omega_2(t)}
+O\!\big(g(t)^3\big).
\]

Expand
\[
\tfrac12 U_1(t)^2
=\tfrac12\!\int_0^t\!\!\int_0^t A(s_1)A(s_2)\,ds_2ds_1
=\tfrac12\!\int_{s_2<s_1} \!\!A(s_1)A(s_2)\,ds_2ds_1
+\tfrac12\!\int_{s_2<s_1}\!\! A(s_2)A(s_1)\,ds_2ds_1.
\]
Therefore,
\[
U_2(t)-\tfrac12 U_1(t)^2
=\tfrac12\!\int_{s_2<s_1}\!\!\big(A(s_1)A(s_2)-A(s_2)A(s_1)\big)\,ds_2ds_1
=\tfrac12\!\!\int_{0<s_2<s_1<t}\!\![A(s_1),A(s_2)]\,ds_2ds_1,
\]
which is the desired commutator formula. This completes the proof.
\end{proof}

\begin{rmk} Let $C=\max_{s\in [0,L]}\|A(s)\|$. Then $g(t)\le Ct$. We can always replace $g(t)$ by $Ct$ in the $O(\cdot)$ notation.
\end{rmk}

\bigskip

{\scshape Diantong Li}, {\scshape School of Mathematical Sciences, University of Chinese Academy of Sciences, Beijing,  China}\\
\indent \textit{Email address:} \texttt{lidiantong25@mails.ucas.ac.cn}

\medskip
{\scshape Qiaoling Wei, School of Mathematical Sciences, Capital Normal University, Beijing, China}\\
\indent \textit{Email address:} \texttt{wql03@cnu.edu.cn}

\medskip
{\scshape Meirong Zhang, Department of Mathematical Sciences, Tsinghua University, Beijing, China}\\
\indent \textit{Email address:} \texttt{zhangmr@tsinghua.edu.cn}

\medskip
{\scshape Zhe Zhou, State Key Laboratory of Mathematical Sciences, Academy of Mathematics and Systems Science, Chinese Academy of Sciences, Beijing, China.}\\
\indent \textit{Email address:} \texttt{zzhou@amss.ac.cn}

\end{document}